\documentclass[12pt]{amsart}

\usepackage{amssymb}

\makeatletter
\@namedef{subjclassname@2020}{%
  \textup{2020} Mathematics Subject Classification}
\def\author@andify{\nxandlist{\unskip,\penalty-1 \space\ignorespaces}{\unskip,\penalty-1 \space\ignorespaces}{\unskip,\penalty-2 \space\ignorespaces}}
\makeatother

\newtheorem{theorem}{Theorem}[section]
\newtheorem{corollary}[theorem]{Corollary}
\newtheorem{lemma}[theorem]{Lemma}
\newtheorem{proposition}[theorem]{Proposition}

\theoremstyle{definition}
\newtheorem{remark}[theorem]{Remark}

\numberwithin{equation}{section}

\usepackage[letterpaper,textwidth=13.5cm,textheight=23cm,hcentering,vcentering]{geometry}
\newcommand{\Z}{\mathbb{Z}}
\newcommand{\N}{\mathbb{N}}
\newcommand{\R}{\mathbb{R}}
\newcommand{\C}{\mathbb{C}}
\newcommand{\E}{\mathbb{E}}
\newcommand{\PP}{\mathbb{P}}
\newcommand{\Nodd}{N_{\mathrm{odd}}}
\DeclareMathOperator{\Mean}{M}

\begin{document}

\title[Chowla's cosine problem]{A half-constant upper bound for Chowla's cosine problem}

\author[B. Bu et al.]{Bryan Bu}
\address[Bryan Bu]{University of Massachusetts Amherst, 141 Commonwealth Avenue, Amherst, MA 01003, USA}
\email{bryanbu@umass.edu}

\author{Fayzan Khan}
\address[Fayzan Khan]{Summit High School, Summit, NJ, USA}
\email{itsfayzankhan@gmail.com}

\author{Hyunjun Yi}
\address[Hyunjun Yi]{Seoul International School, 15 Seongnam-daero 1518beon-gil, Sujeong-gu, Seongnam-si, Gyeonggi-do 13113, Republic of Korea}
\email{hjyi2027@gmail.com}

\author{Atharv Jha}
\address[Atharv Jha]{Dougherty Valley High School, 10550 Albion Road, San Ramon, CA 94582, USA}
\email{jha.atharv27@gmail.com}

\author{Ishaan Chandupatla}
\address[Ishaan Chandupatla]{Old Bridge High School, 4209 Route 516, Matawan, NJ 07747, USA}
\email{Srmatiis@yahoo.com}

\author{Madhav Nagpal}
\address[Madhav Nagpal]{Monta Vista High School, 21840 McClellan Road, Cupertino, CA 95014, USA}
\email{madhavnagpal2806@gmail.com}

\thanks{Authors are listed in three tiers of contribution, alphabetically by surname within each tier, and the authors within each tier contributed equally: B.~Bu, F.~Khan and H.~Yi; A.~Jha; I.~Chandupatla and M.~Nagpal. Corresponding author: B.~Bu.}

\date{}

\begin{abstract}
Let $\lambda(n)$ be the infimum of $-\min_x\sum_{a\in A}\cos(ax)$ over all sets $A$ of $n$ positive integers. The classical Sidon-difference construction gives $\lambda(n)\le(1+o(1))\sqrt{n/2}$. We construct sets for which the negative excursion of the cosine sum has leading constant $1/2$: $\lambda(n)\le\frac12\sqrt n+O(n^{1/4}\sqrt{\log n})$, so that $\limsup_{n\to\infty}\lambda(n)/\sqrt n\le1/2$. The construction combines a two-sided Sidon square identity with a Singer difference set, a Fej\'er-kernel majorant, a parity-biased deletion, and an exact-cardinality probabilistic selection. We also record an $L^4$ criterion and a multiplicity bound showing what any construction with a smaller leading constant must achieve. In the opposite direction we carry the $L^1$ norm through Bedert's boundary estimate, obtaining it with an explicit constant and with one factor of $-\min f_A$ replaced by $\|f_A\|_1$, where $f_A(x)=\sum_{a\in A}\cos(ax)$, and deduce two lower bounds: $-\min f_A\ge cn^{1/4}$ for every $n$-element set $A\subset[1,Cn]$, and $(-\min f_A)^3\|f_A\|_1^2\ge cn$ for every $n$-element $A\subset\N$. The second recovers the exponent $1/5$ of Bedert and Shankar and improves it whenever $\|f_A\|_1$ is small compared with $-\min f_A$. Numerical values of the construction are reported up to $|A|=25440$.
\end{abstract}

\subjclass[2020]{Primary 42A05; Secondary 05B10, 11B30, 11L03}

\keywords{Chowla's cosine problem, minimum of cosine sums, Sidon sets, Singer difference sets, arithmetic progressions}

\maketitle
\markleft{B. BU ET AL.}

\section{Introduction}

For a finite set $A\subset\N$ write
\[
f_A(x)=\sum_{a\in A}\cos(ax),
\]
and define
\[
\lambda(n)=\inf_{\substack{A\subset\N\\ |A|=n}}\Bigl(-\min_{x\in\R}f_A(x)\Bigr),
\]
the least possible depth of the negative excursion of a cosine sum with $n$ frequencies. Since $f_A$ has mean zero and $f_A(0)=n$, one has $\lambda(n)>0$. Ankeny and Chowla \cite{Cho52} asked whether $\lambda(n)\to\infty$; Uchiyama and Uchiyama \cite{UU60} proved this using Cohen's work on the Littlewood $L^1$ problem, and the resolution of that problem by Konyagin \cite{Kon81} and McGehee, Pigno and Smith \cite{MPS81} gives $\lambda(n)\gg\log n$. Bourgain \cite{Bou86} and Ruzsa \cite{Ruz04} obtained $\lambda(n)\ge\exp(c\sqrt{\log n})$, and polynomial lower bounds were proved recently: $\lambda(n)\ge n^{1/10-o(1)}$ by Jin, Milojevi\'c, Tomon and Zhang \cite{JMTZ25} and $\lambda(n)\ge n^{1/5-o(1)}$ by Bedert \cite{Bed25}, sharpened to $\lambda(n)\gg n^{1/5}$ by Shankar \cite{Sha26}; the spectral method of \cite{JMTZ25} has since been refined by Yip \cite{Yip26} to $\lambda(n)\gg n^{1/6.43}$. Chowla \cite{Cho65} conjectured that $\lambda(n)\asymp\sqrt n$. Explicit values are known only for $n=2,3$ \cite{Mer12}.

In the other direction, the standard construction, and the only one we are aware of, is the Sidon-difference construction. A set $S\subset\Z$ is \emph{Sidon} if every nonzero difference has at most one ordered representation, equivalently if $s_1-s_2=s_3-s_4$ and $s_1\ne s_2$ force $(s_1,s_2)=(s_3,s_4)$. If $B$ is a Sidon set of size $k$ and $A=\{b-b'>0: b,b'\in B\}$, then $|A|=\binom k2$ and
\[
f_A(x)=\tfrac12\Bigl(\Bigl|\sum_{b\in B}e^{ibx}\Bigr|^2-k\Bigr)\ge-\frac k2 ,
\]
so that $\lambda(\binom k2)\le k/2$, that is,
\begin{equation}\label{eq:folklore}
\lambda(m)\le(1+o(1))\sqrt{m/2}\qquad(m=\tbinom k2\to\infty).
\end{equation}
This gives the leading constant $1/\sqrt2\approx0.707$ along the cardinalities $\binom k2$. The explicit form recorded by Mercer \cite[Fact~2.1]{Mer19}, valid for every $m$, is $\lambda(m)\le\sqrt{2m}+1/2$; the larger constant there comes from padding the difference set with up to $\sqrt{2m}$ arbitrary frequencies, each bounded trivially. Recent papers quote the construction only as $\lambda(n)\ll\sqrt n$ with an unspecified or generous constant (for instance $10\sqrt n$ in \cite{Bed25}). We are not aware of any published construction with leading constant below $1/\sqrt2$.

In this paper we sharpen the leading constant to $1/2$. The idea is to use sums as well as differences of a Sidon set: for a real cosine sum $\rho$ with $k$ frequencies, $\rho^2$ produces $k(k-1)$ distinct nondiagonal frequencies while the diagonal contribution is only $k/2$. The price is a correction term (the sum $g$ below) that is not controlled by positivity alone, and the bulk of the paper is the construction of a Sidon set for which this correction is negligible at every point of the circle.

\subsection*{Results}
For $S=\{s_1,\dots,s_k\}\subset\Z$ define
\[
A(S)=\{3|s_i-s_j|: i<j\}\cup\{|3(s_i+s_j)+1|: i<j\},
\]
\[
\rho(x)=\sum_{j=1}^k\cos\bigl((3s_j+\tfrac12)x\bigr),\qquad
g(x)=\sum_{j=1}^k\cos\bigl((6s_j+1)x\bigr).
\]

\begin{proposition}[Two-sided Sidon identity]\label{prop:identity-intro}
If $S\subset\Z$ is Sidon and $|S|=k\ge2$, then $A(S)\subset\N$ has $|A(S)|=k(k-1)$, and for every real $x$,
\[
f_{A(S)}(x)=\rho(x)^2-\frac k2-\frac12 g(x).
\]
\end{proposition}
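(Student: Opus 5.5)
The plan is to expand $\rho^2$ with the product-to-sum formula, read off the frequencies that appear, and then use the Sidon property together with a congruence argument modulo $3$ to show that these frequencies are distinct positive integers.

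\textbf{The identity.} Write $\alpha_j=3s_j+\tfrac12$. Then
\[
\rho(x)^2=\sum_{j}\cos^2(\alpha_jx)+2\sum_{i<j}\cos(\alpha_ix)\cos(\alpha_jx).
\]
The diagonal terms give $\tfrac k2+\tfrac12\sum_j\cos(2\alpha_jx)=\tfrac k2+\tfrac12 g(x)$, since $2\alpha_j=6s_j+1$. Each off-diagonal product satisfies
\[
2\cos(\alpha_ix)\cos(\alpha_jx)=\cos((\alpha_i-\alpha_j)x)+\cos((\alpha_i+\alpha_j)x).
\]
Here $\alpha_i-\alpha_j=3(s_i-s_j)$ and $\alpha_i+\alpha_j=3(s_i+s_j)+1$. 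Because cosine is even, we may replace each of these by its absolute value. Hence
\[
\rho^2-\tfrac k2-\tfrac12 g=\sum_{i<j}\Bigl(\cos(3|s_i-s_j|x)+\cos(|3(s_i+s_j)+1|x)\Bigr),
\]
and this equals $f_{A(S)}$ provided the $k(k-1)$ listed frequencies are distinct positive integers.

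\textbf{Positivity.} The elements of $S$ are distinct, so $3|s_i-s_j|\ge3$ for $i<j$. The number $3(s_i+s_j)+1$ is $\equiv1\pmod 3$, so it is nonzero and its absolute value is positive.

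\textbf{Distinctness.} This is the main step. Separate the frequencies by residue modulo $3$.
\begin{itemize}
\item Difference frequencies are $\equiv0\pmod 3$.
\item A sum frequency is $\equiv1$ or $\equiv2\pmod 3$, according as $3(s_i+s_j)+1$ is positive or negative.
\end{itemize}
So a difference frequency never equals a sum frequency, and it remains to rule out coincidences within each family.

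\emph{Two difference frequencies.} Suppose $|s_i-s_j|=|s_k-s_l|$ with $i<j$ and $k<l$. After possibly swapping the roles of $k$ and $l$, we get $s_i-s_j=s_k-s_l\neq0$ as an ordered identity. The Sidon property then forces $(s_i,s_j)$ to be one of the two orderings of $(s_k,s_l)$ consistent with that identity, so the unordered pairs coincide.

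\emph{Two sum frequencies.} Equal absolute values are equal residues mod $3$, so the two signs agree and $s_i+s_j=s_k+s_l$. Rewrite this as $s_i-s_k=s_l-s_j$.
\begin{itemize}
\item If $s_i=s_k$, then $s_j=s_l$, and the pairs coincide.
\item Otherwise the difference is nonzero, and the Sidon property gives $(s_i,s_k)=(s_l,s_j)$. Then $\{i,j\}=\{k,l\}$ as unordered pairs, again the same pair.
\end{itemize}

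\textbf{Conclusion.} The frequencies are $k(k-1)$ distinct positive integers, so $|A(S)|=k(k-1)$, and the displayed sum is exactly $f_{A(S)}(x)$. The only point needing care is the sign bookkeeping for negative sums, which the residue argument mod $3$ handles cleanly.
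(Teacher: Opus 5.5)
Your proposal is correct and follows essentially the same route as the paper: you expand $\rho^2$ with $\cos^2$ and product-to-sum, separate the difference and sum families modulo $3$, and apply the Sidon property within each family. Your residue argument (a sum frequency is $\equiv 1$ or $\equiv 2 \pmod 3$ according to the sign of $3(s_i+s_j)+1$) is equivalent to the paper's observation that the opposite-sign case would force $3(s_i+s_j+s_a+s_b)=-2$. One small fix: you use $k$ both for $|S|$ and as a pair index, so rename the second pair's indices.
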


\begin{theorem}[Base cardinalities]\label{thm:base-intro}
For every sufficiently large integer $k$ there exists a set $A\subset\N$ with $|A|=k(k-1)$ such that
\[
\min_x f_A(x)\ge-\frac k2-O\bigl(\sqrt{k\log k}\bigr).
\]
Equivalently, $\lambda(k(k-1))\le k/2+O(\sqrt{k\log k})$.
\end{theorem}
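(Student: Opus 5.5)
By Proposition~\ref{prop:identity-intro}, for any Sidon set $S$ of size $k$ we have
\[
f_{A(S)}(x)\ge -\tfrac k2-\tfrac12 g(x),\qquad g(x)=\sum_{j=1}^k\cos\bigl((6s_j+1)x\bigr).
\]
Since $\rho^2\ge0$, the theorem reduces to constructing a Sidon set $S$ of size $k$ with
\[
\max_x\bigl(-g(x)\bigr)=O\bigl(\sqrt{k\log k}\bigr).
\]
Once such an $S$ is found, $A(S)$ is the required set of cardinality $k(k-1)$.

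The plan is to obtain $S$ by random selection from a larger Sidon set. Take a prime power $q$ and a Singer difference set $D\subset\Z/(q^2+q+1)\Z$. Its $q+1$ elements, lifted to integers in $[0,q^2+q]$, form a Sidon set in $\Z$, because a difference coincidence in $\Z$ forces one modulo $N=q^2+q+1$. Any subset of a Sidon set is Sidon. We choose $q$ so that $q+1\ge 2k$, say $q\asymp k$; prime gaps (or Bertrand for primes) keep this possible for every large $k$. We then select $S\subset D$ of size exactly $k$ uniformly at random and control $g$ on the whole circle.

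The key step is to show that $g$ is small for a random $S$. Write $h(x)=\sum_{d\in D}\cos((6d+1)x)$. We have $\E g=\frac{k}{|D|}h$, and fluctuations are of size $\sqrt{k\log k}$ by Hoeffding's inequality for sampling without replacement. A union bound over a grid of $O(k^{C})$ points suffices, since $g$ has frequencies at most $6q^2+O(q)=O(k^2)$, so Bernstein's inequality transfers the grid bound to all $x$. This needs a grid of size $\gg k^2\cdot k^{1/2}$, which is still polynomial.

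The remaining issue, and the main obstacle, is the mean term $\frac{k}{|D|}h(x)$. The function $h$ is an exponential sum over a Singer set and may reach size comparable to $|D|$ at some points. For example, at $x=0$ it equals $|D|$, and that is harmless since it is positive. But $h$ can also be very negative: near $x=\pi$, $\cos((6d+1)\pi)=-1$ for every $d$, so $h(\pi)=-|D|$. A plain uniform choice therefore gives $-g(\pi)\approx k$, which is fatal. This is where a parity-biased deletion enters. We restrict $S$ so that the frequencies $6s+1$ are balanced with respect to the sign patterns near $\pi$ and near the other rational points $2\pi a/m$ with $m$ small, where $h$ can be large. Concretely, we first pass to a subset $D'\subset D$ in which $d$ ranges over residues mod small moduli in a balanced way, or we pair elements so that their contributions cancel near these points.

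The contribution of $h$ at generic $x$ is then bounded by a Fejér-kernel majorant. Since $D$ is a perfect difference set, $|\widehat{1_D}(\xi)|^2=q$ for nonzero $\xi$ modulo $N$. Convolving the sum with a Fejér kernel of length about $N$ converts the continuous $x$ to the discrete characters modulo $N$, up to an error controlled by the kernel, and yields $|h(x)|\ll\sqrt{q}\log q$ away from small-denominator points. The resulting mean term $\frac{k}{|D|}\sqrt q\log q$ is then $O(\sqrt{k}\log k)$. To reach the exact $\sqrt{k\log k}$, we would need to refine the kernel or make the sampling probability $k/|D|$ small enough. We expect this uniform-in-$x$ analysis of the mean near rationals with small denominators to be the hardest part of the argument.
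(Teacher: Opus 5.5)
\textbf{There is a genuine gap, in the reduction step itself.} Since $f_{A(S)}=\rho^2-\frac k2-\frac12 g$, dropping $\rho^2$ gives $f_{A(S)}\ge-\frac k2-\frac12 g$. What you would need is therefore an \emph{upper} bound $\sup_x g(x)=O(\sqrt{k\log k})$, not a bound on $\max_x(-g(x))$; your sign is reversed.

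With the correct sign the reduction is impossible for every $S\subset\Z$. One has $g(0)=k$. Since $(6s+1)\pi/3\equiv\pi/3\pmod{2\pi}$, also $g(\pi/3)=k/2$ identically.

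With your sign it is equally impossible, because $g(\pi)=-k$ for every $S$, and no balancing or deletion can change that value. The point $\pi$ is, however, not an obstacle at all: $\rho(\pi)=0$, so $f_{A(S)}(\pi)=0$ for every Sidon $S$.

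So $\rho^2$ must do real work. This happens near $x=0$ and near $x=\pi/3$, which are exactly the points of $[0,\pi]$ where $6x\in2\pi\Z$ and $\cos x>0$.

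\textbf{The real difficulty is at $\pi/3$, and it points the opposite way to your proposal.} One has $\rho(\pi/3)=\frac{\sqrt3}2\sum_{s\in S}(-1)^s$, hence
\[
f_{A(S)}(\pi/3)=\tfrac34\Bigl(\sum_{s\in S}(-1)^s\Bigr)^2-\tfrac{3k}4 .
\]
The theorem therefore forces a parity \emph{imbalance} $\bigl|\sum_{s\in S}(-1)^s\bigr|\ge(1-o(1))\sqrt{k/3}$. This must persist, as a lower bound on $\rho(\pi/3+t)$, throughout the window of $t$ where $T(6t)$, and hence $g$, is large.

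Choosing $S$ balanced across residues, as you suggest, gives $f_{A(S)}(\pi/3)\approx-3k/4$. That would be worse even than the folklore constant $1/\sqrt2$. A uniform choice gives no control here.

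The paper handles this with the parity block. It deletes $r\asymp\sqrt{q\log q}$ elements $d\in D$ with $d-c$ odd and $|d-c|\le\rho_0$. This adds the coherent positive term $\theta Q_0(t)$ to $T(\pi+3t)$ and gives $\rho\ge\sqrt{0.15(q+2)}$ on all of Zone II, so $\rho^2$ absorbs $g/2$ there.

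Near $0$ the paper uses the nonnegative tent-weighted Fej\'er main term $\theta m(3x)$ together with the doubling inequality $m(2u)\le m(u)/\cos^2(u/2)$. This shows that either $\rho^2\ge g/2$ or $g=O(\Lambda)$.

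Only on the remaining zone does the paper bound $g$ directly. There it gets $g\le4\Lambda$, because $m(6x)$ is small, or because $\cos x\le0$ for $x\ge\pi/2$.

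The tent weights also remove the logarithm you lose with an unweighted Singer sum. The Fej\'er aliasing identity gives $|R|\le\sqrt q$ exactly, with no $\log q$ factor.
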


\begin{theorem}[Half-constant upper bound]\label{thm:main-intro}
As $n\to\infty$,
\[
\lambda(n)\le\frac12\sqrt n+O\bigl(n^{1/4}\sqrt{\log n}\bigr),
\qquad\text{so that}\qquad
\limsup_{n\to\infty}\frac{\lambda(n)}{\sqrt n}\le\frac12 .
\]
\end{theorem}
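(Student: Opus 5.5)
We derive Theorem~\ref{thm:main-intro} from Theorem~\ref{thm:base-intro} by interpolating between consecutive base cardinalities. Given large $n$, choose $k$ with $k(k-1)\le n<(k+1)k$, so that $k=\sqrt n+O(1)$ and the gap $r:=n-k(k-1)$ satisfies $0\le r<2k$. Theorem~\ref{thm:base-intro} supplies $A_0$ with $|A_0|=k(k-1)$ and $\min f_{A_0}\ge -k/2-O(\sqrt{k\log k})$. Padding $A_0$ by $r$ further frequencies and bounding them trivially would cost up to $2k\approx 2\sqrt n$, which ruins the constant. So the $r$ extra frequencies must themselves contribute a negative excursion of only $O(\sqrt{k\log k})=O(n^{1/4}\sqrt{\log n})$. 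Equivalently, we may \emph{delete} up to $2k$ frequencies from the set for $k+1$, where the same constraint applies.

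The plan is to add $r$ frequencies whose cosine sum is uniformly small in the negative direction. First I would choose the extras as a random subset $R$ of size $r$ drawn from a large pool $P\subset\N$ of integers disjoint from $A_0$. For instance, take $P$ to be an interval of length $L\gg k$ placed far beyond $\max A_0$. I would use a random subset, possibly with random signs fixed by parity so that $\cos$ is biased. This matches the "exact-cardinality probabilistic selection" and "parity-biased deletion" mentioned in the abstract.

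For a fixed $x$, the sum $\sum_{a\in R}\cos(ax)$ has mean $\tfrac{r}{|P|}\sum_{a\in P}\cos(ax)$. This mean is bounded in absolute value by $O(r/|P|\cdot\min(|P|,\|x\|^{-1}))$. The fluctuations are $O(\sqrt{r\log N})$ by Hoeffding for sampling without replacement, with $N$ the number of points in a net. Since all frequencies are $\mathrm{poly}(k)$, a net of $\mathrm{poly}(k)$ points in $x$ plus a Bernstein-inequality derivative bound transfers the estimate to all $x$, giving $\min_x\sum_{a\in R}\cos(ax)\ge -O(\sqrt{k\log k})-(\text{mean term})$.

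The main obstacle is the mean term near $x=0$ (and near other points where the pool sum is large). There the pool's Dirichlet-type kernel can be as negative as roughly $-0.2\,r$, because the Dirichlet kernel has negative lobes of size a constant fraction of its peak. I would handle this by replacing the uniform pool with a Fejér-kernel weighting, i.e.\ a nonnegative-mean majorant. Concretely, I would select from $P$ with probabilities proportional to Fejér weights, or take $P$ as a union of arithmetic progressions whose sum is a nonnegative Fejér kernel, so the mean term is $\ge 0$ everywhere. Exact cardinality $r$ is then enforced by conditioning, and the concentration still holds.

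Finally, I would check disjointness from $A_0$ so that $|A|=n$ exactly. Combining the bounds gives
\[
\min f_A\ge -\tfrac k2-O(\sqrt{k\log k})=-\tfrac12\sqrt n-O(n^{1/4}\sqrt{\log n}),
\]
which is Theorem~\ref{thm:main-intro}.
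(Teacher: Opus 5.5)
Your reduction matches the paper's: take $k$ maximal with $k(k-1)\le n$, put $m=n-k(k-1)<2k$, and adjoin to the base set $A_0$ a disjoint set $R$ of $m$ frequencies with small negative excursion, using $\min(f_{A_0}+f_R)\ge\min f_{A_0}+\min f_R$. The gap is in your construction of $R$.

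You claim a Fej\'er weighting makes the mean term $\sum_a p_a\cos(ax)$ ``$\ge0$ everywhere''. That cannot hold: all frequencies are positive, so this function has mean zero over a period and is negative somewhere (that is Chowla's problem itself). The placement of the pool makes it worse. Take the pool to be an interval beyond $\max A_0$, with frequencies $a_0+j$ ($|j|<M$), weights $\theta(1-|j|/M)$, and $\theta M=r$. Then the mean term is $\theta\operatorname{Re}\bigl(e^{ia_0x}F_M(x)\bigr)$. At $x=\pi/a_0$ with $a_0\gg M$ this is about $-\theta M=-r$. Translating the kernel destroys its positivity, and you are back to the trivial padding cost you wanted to avoid.

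What works is a dilate, not a translate. Use frequencies $tj$ ($1\le j<M$, $t>\max A_0$) with $p_j=\theta(1-j/M)$. The mean term is then $\frac\theta2\bigl(F_M(tx)-1\bigr)\ge-\frac\theta2$. With $M\asymp r$, your net-plus-Hoeffding argument and an exact-cardinality step as in Lemma~\ref{lem:exact} then give $-\min f_R=O(\sqrt{r\log r})$, so the route can be repaired.

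None of this machinery is needed, and that is the idea you missed. Your statement that padding costs up to $2k$ is true only if each extra cosine is bounded below by $-1$. The Sidon-difference construction in the introduction already gives, for every $m\ge1$, a set $B$ of size $m$ with $-\min f_B\le\sqrt{2m}+\frac12$ (Mercer's bound \eqref{eq:mercer}). Taking $R=tB$ with $t>\max A_0$ gives $f_{A_0\cup tB}(x)=f_{A_0}(x)+f_B(tx)$, which is Lemma~\ref{lem:subadd}. Since $m<2k$, the cost is $O(\sqrt k)=O(n^{1/4})$, inside the error term, and $k=\sqrt n+O(1)$ finishes the proof. This is exactly the paper's three-line argument. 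The ``parity-biased deletion'' and ``exact-cardinality probabilistic selection'' in the abstract belong to the base construction of Theorem~\ref{thm:base-intro}, not to this step.
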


Section~\ref{sec:limits} explains why $1/2$ is the natural endpoint of this method: an $L^4$ identity (Proposition~\ref{prop:L4}) and a multiplicity bound (Proposition~\ref{prop:multiplicity}) show that in any construction of the form ``square of a trigonometric polynomial with coefficients of constant modulus, minus an analytically absorbed correction'', a leading constant below $1/2$ requires every frequency to have at least three representations as a difference of the support, whereas the only symmetry of $\Z$ produces two. Section~\ref{sec:dense} concerns the opposite direction. The boundary estimate of Bedert \cite[Proposition~7.3]{Bed25} bounds, for a symmetric set $A'\subset\Z\setminus\{0\}$ whose exponential sum is at least $-K$, the number of maximal arithmetic progressions in $A'$ with a given common difference and at least two terms by $O(K^4)$. We reproduce his argument with the $L^1$ norm of the exponential sum carried through and obtain the bound $O(K^3\|\widehat{\mathbf 1_{A'}}\|_1)$ with an explicit constant (Proposition~\ref{prop:boundary}). Combined with an elementary count of three-term progressions this gives $-\min f_A\ge cn^{1/4}$ for every $A\subset[1,Cn]$ with $|A|=n$ (Proposition~\ref{prop:dense}), a class that contains the sets of Theorem~\ref{thm:base-intro}, and for which the finite-group results of Sanders \cite{San10} give a polynomial bound with an exponent that decays with $C$. Combined with Shankar's total-boundary identity \cite{Sha26} and a third-moment inequality for the positive part of $f_A$ it gives $(-\min f_A)^3\|f_A\|_1^2\ge cn$ for every $n$-element $A\subset\N$ (Theorem~\ref{thm:L1}), which contains the bound $-\min f_A\gg n^{1/5}$ of \cite{Bed25,Sha26} because $\|f_A\|_1\le-2\min f_A$, and improves it when the $L^1$ norm is smaller; a companion form of Ruzsa's lemma bounds arithmetic progressions in $A$ by $64(-\min f_A)\|f_A\|_1+6$ (Corollary~\ref{cor:APL1}). Section~\ref{sec:numerics} reports numerical values of the construction.

\subsection*{Organisation}
Section~\ref{sec:identity} proves Proposition~\ref{prop:identity-intro}. Section~\ref{sec:singer} sets up the Singer difference set and the Fej\'er-kernel majorant $m(y)$. Section~\ref{sec:parity} isolates a parity block of elements near the centre of the difference set. Section~\ref{sec:thinning} carries out the exact-cardinality probabilistic thinning. Section~\ref{sec:zones} performs the three-zone estimate and proves Theorem~\ref{thm:base-intro}. Section~\ref{sec:passage} passes from the base cardinalities $k(k-1)$ to every cardinality $n$ and proves Theorem~\ref{thm:main-intro}. Section~\ref{sec:limits} contains the $L^4$ criterion, Section~\ref{sec:dense} the lower bounds, and Section~\ref{sec:numerics} the numerical results.

\section{The square identity}\label{sec:identity}

\begin{proposition}[Two-sided Sidon identity]\label{prop:identity}
If $S\subset\Z$ is Sidon and $|S|=k\ge2$, then $A(S)\subset\N$ has
\[
|A(S)|=k(k-1),
\]
and for every real $x$,
\[
f_{A(S)}(x)=\rho(x)^2-\frac k2-\frac12g(x).
\]
\end{proposition}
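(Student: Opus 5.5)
The plan is to expand $\rho(x)^2$ with product-to-sum formulas and then match the resulting frequencies with $A(S)$; the cardinality claim comes from the Sidon property together with a residue-mod-$3$ argument.

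\emph{Expansion.} Write $\theta_j=(3s_j+\tfrac12)x$. Then
\[
\rho^2=\sum_j\cos^2\theta_j+2\sum_{i<j}\cos\theta_i\cos\theta_j .
\]
Using $\cos^2\theta=\tfrac12(1+\cos2\theta)$, the diagonal part equals $\tfrac k2+\tfrac12\sum_j\cos((6s_j+1)x)=\tfrac k2+\tfrac12g(x)$. Using $2\cos\alpha\cos\beta=\cos(\alpha-\beta)+\cos(\alpha+\beta)$, the off-diagonal part equals
\[
\sum_{i<j}\Bigl(\cos\bigl(3(s_i-s_j)x\bigr)+\cos\bigl((3(s_i+s_j)+1)x\bigr)\Bigr).
\]
Since cosine is even, each argument may be replaced by its absolute value. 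Hence $\rho^2-\tfrac k2-\tfrac12g$ equals the multiset sum of $\cos(ax)$ over the $k(k-1)$ numbers
\[
3|s_i-s_j|,\qquad |3(s_i+s_j)+1|\qquad (i<j).
\]
The identity $f_{A(S)}=\rho^2-\tfrac k2-\tfrac12 g$ therefore holds provided these $k(k-1)$ numbers are distinct positive integers, so that the multiset is exactly the set $A(S)$ and $|A(S)|=k(k-1)$.

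\emph{Positivity.} The numbers $3|s_i-s_j|$ are positive because the $s_j$ are distinct. The numbers $|3(s_i+s_j)+1|$ are $\equiv\pm1\pmod 3$, so they are nonzero.

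\emph{Distinctness.} There are three cases.
\begin{enumerate}
\item Difference-type versus sum-type: these never collide, since the first are divisible by $3$ and the second are not.
\item Two difference-type elements: if $|s_i-s_j|=|s_k-s_l|$ with $i<j$, $k<l$, then one nonzero difference of $S$ equals another up to sign. By the Sidon property the ordered pairs agree, possibly after swapping, so $\{i,j\}=\{k,l\}$.
\item Two sum-type elements: suppose $|3(s_i+s_j)+1|=|3(s_k+s_l)+1|$. If the signs agree, then $s_i+s_j=s_k+s_l$. The Sidon property in its sum form, $s_i-s_k=s_l-s_j$, forces $\{i,j\}=\{k,l\}$ when the pairs consist of distinct elements. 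If the signs differ, then $3(s_i+s_j)+1=-3(s_k+s_l)-1$, which gives $3(s_i+s_j+s_k+s_l)=-2$. This is impossible modulo $3$.
\end{enumerate}

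\emph{Main obstacle.} The only real point to watch is the sum-type case with opposite signs. It is exactly what the shift by $\tfrac12$ (the $+1$ after multiplying by $3$) is designed to exclude, and the mod-$3$ argument disposes of it. The remaining step, passing from $s_i+s_j=s_k+s_l$ to $\{i,j\}=\{k,l\}$, needs the standard equivalence of the difference and sum formulations of the Sidon property. If $s_i-s_k=s_l-s_j\neq0$, uniqueness of the ordered representation gives $(s_i,s_k)=(s_l,s_j)$, so $i=l$ and $k=j$, and the two unordered pairs coincide. If the difference is $0$, then $i=k$ and $j=l$ directly.
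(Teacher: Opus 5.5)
Your proposal is correct and follows the paper's proof essentially step for step. Both arguments separate the two families modulo $3$ and use the Sidon property within each family, excluding the opposite-sign sum case via $3(s_i+s_j+s_k+s_l)=-2$, and then finish with the product-to-sum expansion of $\rho^2$. Your explicit derivation of $\{i,j\}=\{k,l\}$ from $s_i+s_j=s_k+s_l$ spells out a step the paper leaves implicit, though you should avoid reusing $k$ both as $|S|$ and as an index.
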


\begin{proof}
The frequencies $3|s_i-s_j|$ are divisible by $3$, whereas $|3(s_i+s_j)+1|$ is congruent to $\pm1$ modulo $3$, so the two families are disjoint, and all frequencies are positive. The Sidon property gives distinct difference frequencies. If
\[
|3(s_i+s_j)+1|=|3(s_a+s_b)+1|,
\]
the negative-sign alternative would imply $3(s_i+s_j+s_a+s_b)=-2$, which is impossible. Thus $s_i+s_j=s_a+s_b$, and the Sidon property implies $\{i,j\}=\{a,b\}$. Hence each family has $\binom k2$ elements.

Writing $\alpha_j=3s_j+\frac12$ and expanding,
\begin{align*}
\rho(x)^2&=\sum_j\cos^2(\alpha_jx)+2\sum_{i<j}\cos(\alpha_ix)\cos(\alpha_jx)\\
&=\frac k2+\frac12\sum_j\cos\bigl((6s_j+1)x\bigr)+\sum_{i<j}\cos\bigl(3(s_i-s_j)x\bigr)\\
&\quad+\sum_{i<j}\cos\bigl((3(s_i+s_j)+1)x\bigr).
\end{align*}
Since cosine is even and all frequencies above occur exactly once, the last two sums are $f_{A(S)}(x)$. Rearranging proves the identity.
\end{proof}

The significance of Proposition~\ref{prop:identity} is that $k(k-1)\sim k^2$ positive frequencies are created while the unavoidable diagonal contribution is only $k/2$. The remainder of the proof constructs $S$ so that $\rho^2-g/2\ge-o(k)$ uniformly in $x$. Note that positivity of $\rho^2$ alone does not suffice: at a zero $x_0$ of $\rho$ one has $f_{A(S)}(x_0)=-k/2-g(x_0)/2$, and $g$ is a cosine sum with $k$ terms.

\section{Singer sets and Fej\'er smoothing}\label{sec:singer}

Let $q$ be a prime and
\[
v=q^2+q+1 .
\]
We use a Singer difference set \cite{Sin38} $D\subset\Z/v\Z$ of size $q+1$ with the perfect-difference property: every nonzero residue modulo $v$ has a unique representation $d-d'$ with $d,d'\in D$, $d\ne d'$. Choose representatives $D\subset\{0,\dots,v-1\}$ and put
\[
c=\frac{v-1}2,\qquad L=c+1=\frac{v+1}2 .
\]
Every subset of $D-c\subset\Z$ is Sidon over the integers, since a coincidence of differences over $\Z$ would be a coincidence modulo $v$.

Define the tent weight
\[
w(n)=1-\frac{|n-c|}{L},\qquad 0\le n\le v-1,
\]
and the Fej\'er kernel
\[
F_M(y)=\frac1M\Bigl(\frac{\sin(My/2)}{\sin(y/2)}\Bigr)^2 .
\]
Set
\[
m(y)=\frac{q+1}{v}F_L(y),\qquad \kappa=m(0)=\frac{(q+1)L}{v}.
\]
Thus
\[
\frac{q+1}2\le\kappa\le\frac{q+2}2 .
\]
We shall use
\begin{equation}\label{eq:mbounds}
0\le m(y)\le\kappa,\qquad m(y)\le\frac{q+1}{vL}\cdot\frac1{\sin^2(y/2)},
\end{equation}
and, for $|u|<\pi$,
\begin{equation}\label{eq:doubling}
m(2u)\le\frac{m(u)}{\cos^2(u/2)} .
\end{equation}
The last inequality follows from $\sin^2(Lu)=4\sin^2(Lu/2)\cos^2(Lu/2)\le4\sin^2(Lu/2)$ and $\sin^2u=4\sin^2(u/2)\cos^2(u/2)$.

\begin{lemma}[Singer flat spectrum]\label{lem:flat}
For $j\not\equiv0\pmod v$,
\[
\Bigl|\sum_{d\in D}e^{-2\pi ijd/v}\Bigr|^2=q .
\]
Consequently, for every $\phi:\{0,\dots,v-1\}\to\C$, if
\[
\Phi(j)=\sum_{n=0}^{v-1}\phi(n)e^{2\pi ijn/v},
\]
then
\[
\Bigl|\sum_{d\in D}\phi(d)-\frac{q+1}v\Phi(0)\Bigr|\le\frac{\sqrt q}{v}\sum_{j=1}^{v-1}|\Phi(j)| .
\]
\end{lemma}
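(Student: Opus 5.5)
The first claim will follow from the perfect-difference property through a Parseval-type computation. Write $\widehat{D}(j)=\sum_{d\in D}e^{-2\pi ijd/v}$. Then
\[
|\widehat{D}(j)|^2=\sum_{d,d'\in D}e^{-2\pi ij(d-d')/v}=(q+1)+\sum_{r\ne0}N(r)e^{-2\pi ijr/v},
\]
where $N(r)$ is the number of ordered pairs $(d,d')$ with $d\ne d'$ and $d-d'\equiv r\pmod v$. The perfect-difference property says $N(r)=1$ for every nonzero residue $r$. For $j\not\equiv0\pmod v$ we then have $\sum_{r\ne0}e^{-2\pi ijr/v}=-1$. Hence $|\widehat{D}(j)|^2=q+1-1=q$.

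For the consequence, I will apply finite Fourier inversion on $\{0,\dots,v-1\}$:
\[
\phi(n)=\frac1v\sum_{j=0}^{v-1}\Phi(j)e^{-2\pi ijn/v}.
\]
Summing over $n=d\in D$ gives
\[
\sum_{d\in D}\phi(d)=\frac1v\sum_{j=0}^{v-1}\Phi(j)\widehat{D}(j).
\]
The $j=0$ term is $\frac{|D|}{v}\Phi(0)=\frac{q+1}{v}\Phi(0)$. Each remaining term satisfies $|\widehat{D}(j)|=\sqrt q$ by the first part. The triangle inequality then bounds the difference by $\frac{\sqrt q}{v}\sum_{j\ge1}|\Phi(j)|$, which is the stated estimate.

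The argument has no real obstacle. The only points needing care are the sign conventions: $\Phi$ uses $e^{+2\pi ijn/v}$, so the inversion must use $e^{-2\pi ijn/v}$, and this matches $\widehat{D}$ as defined. One also needs $D$ to be viewed as a set of residues, which is fine because the representatives lie in $\{0,\dots,v-1\}$ and every $n$ in that range is a distinct residue.
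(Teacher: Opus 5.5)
Your proposal is correct and follows essentially the same route as the paper: you expand $|\sum_{d\in D}e^{-2\pi ijd/v}|^2$ and use the perfect-difference property to get $q+1-1=q$, then apply finite Fourier inversion, sum over $D$, and isolate the $j=0$ term before applying the triangle inequality. The sign in the character sum over nonzero residues is immaterial, since that sum equals $-1$ either way.
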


\begin{proof}
The perfect-difference property gives
\[
\Bigl|\sum_{d\in D}e^{-2\pi ijd/v}\Bigr|^2=q+1+\sum_{u=1}^{v-1}e^{2\pi iju/v}=q .
\]
By Fourier inversion $\phi(n)=v^{-1}\sum_j\Phi(j)e^{-2\pi ijn/v}$, so
\[
\sum_{d\in D}\phi(d)=\frac1v\sum_{j=0}^{v-1}\Phi(j)\sum_{d\in D}e^{-2\pi ijd/v},
\]
and the second assertion follows from the first and the triangle inequality.
\end{proof}

\begin{lemma}[Tent-weighted Singer sum]\label{lem:tent}
For every real $y$,
\[
\sum_{d\in D}w(d)e^{i(d-c)y}=m(y)+R(y),\qquad|R(y)|\le\sqrt q .
\]
\end{lemma}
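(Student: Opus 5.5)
The plan is to apply Lemma~\ref{lem:flat} with the choice
\[
\phi(n)=w(n)e^{i(n-c)y},\qquad 0\le n\le v-1 .
\]
Then the left-hand side of the statement is exactly $\sum_{d\in D}\phi(d)$. It remains to identify the main term $\frac{q+1}{v}\Phi(0)$ as $m(y)$ and to bound the error $\frac{\sqrt q}{v}\sum_{j\ne0}|\Phi(j)|$ by $\sqrt q$.

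First I would compute $\Phi(j)$ in closed form. Substitute $t=n-c$. Since $n$ ranges over $0,\dots,v-1=2c$, the variable $t$ ranges over $|t|\le c=L-1$, and $w(n)=1-|t|/L$. Hence
\[
\Phi(j)=e^{2\pi ijc/v}\sum_{|t|\le L-1}\Bigl(1-\frac{|t|}{L}\Bigr)e^{it(y+2\pi j/v)}=e^{2\pi ijc/v}F_L\Bigl(y+\frac{2\pi j}{v}\Bigr).
\]
This uses the standard Fej\'er identity $\sum_{|t|<L}(1-|t|/L)e^{itu}=F_L(u)$. In particular $\Phi(0)=F_L(y)$, so the main term is $\frac{q+1}{v}F_L(y)=m(y)$, and $|\Phi(j)|=F_L(y+2\pi j/v)$ because $F_L\ge0$.

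Next I would bound the error by summing over all shifts. The plan is to prove the exact identity
\[
\sum_{j=0}^{v-1}F_L\Bigl(y+\frac{2\pi j}{v}\Bigr)=v .
\]
To see it, expand each $F_L$ as its trigonometric sum and sum over $j$ first. The inner sum $\sum_j e^{2\pi ijt/v}$ vanishes unless $v\mid t$. Since $|t|\le c<v$, only $t=0$ survives, and it contributes $v$. Consequently
\[
\sum_{j=1}^{v-1}|\Phi(j)|=v-F_L(y)\le v ,
\]
and Lemma~\ref{lem:flat} gives $|R(y)|\le\frac{\sqrt q}{v}\cdot v=\sqrt q$.

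I do not expect a serious obstacle. The one point to check carefully is that the window $|t|\le c$ is strictly shorter than the period $v$. This is what makes the aliasing sum collapse to the single term $t=0$, and it is guaranteed by the choice $c=(v-1)/2$.
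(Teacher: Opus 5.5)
Your proposal is correct and follows the paper's proof essentially verbatim. You make the same choice $\phi(n)=w(n)e^{i(n-c)y}$, obtain the same closed form $\Phi(j)=e^{2\pi ijc/v}F_L(y+2\pi j/v)$, and use the same aliasing identity $\sum_{j=0}^{v-1}F_L(y+2\pi j/v)=v$, which rests on $|t|\le c<v$.
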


\begin{proof}
The Fourier series of the tent is
\[
\sum_{n=0}^{v-1}w(n)e^{iny}=e^{icy}F_L(y),
\]
since $n-c$ ranges over $|n-c|\le L-1$ and $\sum_{|m|<L}(1-|m|/L)e^{imy}=F_L(y)$. Apply Lemma~\ref{lem:flat} with $\phi(n)=w(n)e^{i(n-c)y}$. Then $\Phi(j)=e^{2\pi ijc/v}F_L(y+2\pi j/v)$, so $\frac{q+1}v\Phi(0)=m(y)$ and $|\Phi(j)|=F_L(y+2\pi j/v)\ge0$. The discrete aliasing identity
\[
\sum_{j=0}^{v-1}F_L(y+2\pi j/v)=v,
\]
valid because no nonzero $|m|<L$ is divisible by $v$, shows that $\sum_{j\ne0}|\Phi(j)|\le v$, and the total nonzero-frequency error is at most $\sqrt q$.
\end{proof}

\section{A parity block near the centre}\label{sec:parity}

For $0\le r\le c$ let
\[
\Nodd(r)=\#\{d\in D:|d-c|\le r,\ d-c\text{ odd}\}.
\]

\begin{lemma}[Odd-offset counting]\label{lem:odd}
Let $1\le h\le a$ and $a+h\le c$. Then
\[
\Nodd(a+h)\ge\frac{q+1}v\Bigl(a-\frac1{4h}\Bigr)-\frac ah\sqrt q .
\]
\end{lemma}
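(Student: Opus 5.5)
The plan is to apply the second part of Lemma~\ref{lem:flat} to a nonnegative test function $\phi$ with $0\le\phi\le1$, supported on the integers $n$ with $n-c$ odd and $|n-c|\le a+h$. Then $\Nodd(a+h)\ge\sum_{d\in D}\phi(d)$, and Lemma~\ref{lem:flat} gives
\[
\sum_{d\in D}\phi(d)\ge\frac{q+1}{v}\Phi(0)-\frac{\sqrt q}{v}\sum_{j=1}^{v-1}|\Phi(j)| .
\]
It therefore suffices to choose $\phi$ with
\[
\Phi(0)\ge a-\frac1{4h}\qquad\text{and}\qquad\sum_{j\ne0}|\Phi(j)|\le \frac{a}{h}\,v .
\]
We do not use $\phi=\mathbf 1$ on the odd offsets, because the Dirichlet-kernel tails would cost an error of order $\log v$. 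Instead we take a smoothed indicator, a trapezoid, with ramps of width about $h$.

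To handle the parity restriction cleanly, parametrize odd offsets as $n-c=2t+1$, or symmetrically $n-c=2t-1$. Set $\phi(n)=T(t)$, where $T$ is a trapezoid in $t$ written as a convolution
\[
T=\frac1H\,\mathbf 1_{I}*\mathbf 1_{J},
\]
with $I$ an interval of about $a/2$ lattice points on each side and $J$ an interval of length $H\approx h$ (in units of $t$). The parameters are fixed so that the support stays inside $|2t+1|\le a+h$ and $T\equiv1$ on the plateau. Then $\Phi(j)$ equals $e^{i\theta_j}\,\widehat{T}(2\cdot 2\pi j/v)$, up to a unimodular factor. Since $v$ is odd, $j\mapsto2j$ permutes $\Z/v\Z$, so the aliasing argument from the proof of Lemma~\ref{lem:tent} still applies to the doubled frequencies.

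The Fourier transform $\widehat T$ is $H^{-1}$ times a product of two Dirichlet kernels $D_I$ and $D_J$. By Cauchy--Schwarz and the discrete Parseval identity $\sum_{j}|D_I(4\pi j/v)|^2=v|I|$ (valid because $|I|<v$), we get
\[
\sum_{j=0}^{v-1}|\Phi(j)|\le\frac1H\Bigl(\sum_j|D_I|^2\Bigr)^{1/2}\Bigl(\sum_j|D_J|^2\Bigr)^{1/2}=\frac{v\sqrt{|I||J|}}{H}\approx v\sqrt{a/h}.
\]
Since $a\ge h$, this is at most $v\,a/h$, which is the required error term $\frac ah\sqrt q$. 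The main term is
\[
\Phi(0)=\sum_t T(t)=\frac{|I|\,|J|}{H}=|I| .
\]

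The main obstacle is the bookkeeping that makes $\Phi(0)$ exactly at least $a-1/(4h)$ with integer interval lengths and the constraint $|2t+1|\le a+h$. The deficit $1/(4h)$ should come from rounding $a$ and $h$ to the odd-offset lattice. The idea is to pick the plateau length and ramp length of $T$ as the appropriate floors and ceilings of $a/2$ and $h/2$. If this rounding loses more than $1/(4h)$, I would replace the flat ramps by a Fejér-type ramp: convolve $\mathbf 1_J$ with itself and normalize by $H^2$. Optimizing the discrete quadratic edge loss then gives a defect of the form $1/(4h)$, while the Fourier side is still bounded by aliasing of nonnegative Fejér kernels, as in Lemma~\ref{lem:tent}.
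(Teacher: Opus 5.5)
Your route is correct, but it differs from the paper's in how parity is imposed and how the nonzero Fourier coefficients are bounded.

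\textbf{The paper's route.} The paper keeps the cutoff in the full offset variable $u=n-c$. It takes the symmetric trapezoid $\psi(u)=\bigl(\max(0,a+h-|u|)-\max(0,a-h-|u|)\bigr)/(2h)$ and imposes parity by multiplying with $\frac12(1-(-1)^u)$. This gives $\Phi(0)=\frac12\bigl(\widehat\psi(0)-\widehat\psi(\pi)\bigr)$, with $\widehat\psi(0)=2a$ and $|\widehat\psi(\pi)|\le1/(2h)$. For the error it bounds $|\Phi(j)|$ by half the sum of the nonnegative Fej\'er majorant $\widetilde\psi$ evaluated at $2\pi j/v$ and at $2\pi j/v+\pi$. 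Aliasing then gives $\sum_j|\Phi(j)|\le av/h$.

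So the $1/(4h)$ in the statement is the parity leakage $\widehat\psi(\pi)$ of a trapezoid placed on all integers. It is not a lattice-rounding loss, as you guessed.

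\textbf{Your route.} You put the trapezoid directly on the odd sublattice.
\begin{itemize}
\item The parity restriction becomes a dilation of frequencies by $2$, which is harmless since $v$ is odd.
\item No parity term enters $\Phi(0)$.
\item Cauchy--Schwarz with discrete Parseval replaces the majorant-plus-aliasing step and gives $\sum_j|\Phi(j)|\le v\sqrt{|I|/|J|}$. This is a factor $\sqrt{a/h}$ better than the paper's $av/h$.
\end{itemize}
The paper's version stays within the Fej\'er-kernel-and-aliasing framework of Lemma~\ref{lem:tent}; yours is sharper.

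\textbf{The bookkeeping you flag is not an obstacle.} The Fej\'er-ramp fallback is unnecessary. Take $a,h$ integers, as in the application $a=5h_0$, $h=h_0$; the paper's formula with $F_{a\pm h}$ presupposes this too. Choose $|I|=a$ and $|J|=H=h$.
\begin{itemize}
\item $0\le T\le1$.
\item The support $I+J$ consists of $a+h-1$ consecutive values of $t$, hence $a+h-1$ consecutive odd offsets. These fit inside $[-(a+h),a+h]$, which contains at least $a+h$ odd integers.
\item $|I|,|J|<v$, as the Parseval step requires, because $a+h\le c<v$.
\end{itemize}
Then $\Phi(0)=|I|=a$ exactly, with no deficit at all, and Lemma~\ref{lem:flat} yields
\[
\Nodd(a+h)\ \ge\ \frac{q+1}{v}\,a-\sqrt{\frac{qa}{h}} .
\]
This implies the stated bound because $\sqrt{a/h}\le a/h$.

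Be careful that the interval $I$ contains $a$ values of $t$, since there are about $a$ odd offsets in $[-a,a]$. Your reference to floors and ceilings of $a/2$ can only mean half-widths. Taking $|I|\approx a/2$ would halve the main term and break the argument.
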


\begin{proof}
Define the trapezoidal cutoff
\[
\psi(u)=\frac{\max(0,a+h-|u|)-\max(0,a-h-|u|)}{2h}.
\]
Then $0\le\psi\le1$, it is supported on $|u|<a+h$, it equals $1$ for $|u|\le a-h$, and
\[
\widehat\psi(\theta):=\sum_{u\in\Z}\psi(u)e^{iu\theta}=\frac{(a+h)F_{a+h}(\theta)-(a-h)F_{a-h}(\theta)}{2h}.
\]
In particular,
\[
\widehat\psi(0)=2a,\qquad|\widehat\psi(\pi)|\le\frac1{2h},
\]
the latter because $MF_M(\pi)\in\{0,1\}$. Put
\[
\phi(n)=\psi(n-c)\cdot\frac{1-(-1)^{n-c}}2 .
\]
Then $\phi$ is bounded above by the indicator counted by $\Nodd(a+h)$, while
\[
\Phi(0)=\frac{\widehat\psi(0)-\widehat\psi(\pi)}2\ge a-\frac1{4h}.
\]
Using the nonnegative majorant
\[
\widetilde\psi(\theta)=\frac{(a+h)F_{a+h}(\theta)+(a-h)F_{a-h}(\theta)}{2h}
\]
we have $|\Phi(j)|\le\frac12\bigl(\widetilde\psi(2\pi j/v)+\widetilde\psi(2\pi j/v+\pi)\bigr)$, and Fej\'er aliasing gives
\[
\sum_{j=1}^{v-1}|\Phi(j)|\le\frac{av}h .
\]
Lemma~\ref{lem:flat} now yields the claimed bound.
\end{proof}

\section{Exact-cardinality thinning}\label{sec:thinning}

Fix a large integer $k$. Choose a prime $q$ with
\[
4k<q\le8k,
\]
which exists by Bertrand's postulate. Define
\[
N=v(q+1),\qquad
\Delta=\sqrt{(q+1)\log\bigl(64N\sqrt{q+1}\bigr)}+\frac\pi2,\qquad
\Delta'=\Delta+\sqrt q,
\]
\[
r=\lceil15\Delta'\rceil,\qquad\Lambda=\Delta'+r,
\]
and
\[
h_0=\Bigl\lceil\frac v{5(q+1)}\Bigl(r+5\sqrt q+\frac14\Bigr)\Bigr\rceil,\qquad
\rho_0=6h_0,\qquad
\tau_0=\frac15\sqrt{\frac{q+1}{vL\Lambda}} .
\]
For all sufficiently large $q$ these parameters satisfy
\begin{equation}\label{eq:params1}
\rho_0\le\frac{3L}{10},\qquad3\rho_0\tau_0\le\frac25,\qquad\tau_0\le0.05,
\end{equation}
and
\begin{equation}\label{eq:params2}
\Delta'=O\bigl(\sqrt{q\log q}\bigr),\qquad\Lambda=O\bigl(\sqrt{q\log q}\bigr),\qquad
0.09\Delta'\ge\sqrt{0.15(q+2)} .
\end{equation}
Indeed $N\asymp q^3$ gives $\Delta\asymp\sqrt{q\log q}$, hence $r\asymp\Lambda\asymp\sqrt{q\log q}$ and $h_0\asymp q^{3/2}\sqrt{\log q}$, while $L\asymp q^2$; since $(q+1)/(vL)\asymp q^{-3}$ we get $\tau_0\asymp q^{-3/2}\Lambda^{-1/2}\asymp q^{-7/4}(\log q)^{-1/4}$, so $\rho_0\tau_0\asymp q^{-1/4}(\log q)^{1/4}\to0$. The last inequality in \eqref{eq:params2} holds as soon as $\log q\ge7$, because $\log(64N\sqrt{q+1})\ge3.5\log q$ gives $\Delta'\ge\sqrt{3.5(q+1)\log q}$.

By Lemma~\ref{lem:odd} with $a=5h_0$ and $h=h_0$, there are at least
\[
\frac{q+1}v\Bigl(5h_0-\frac1{4h_0}\Bigr)-5\sqrt q\ \ge\ r
\]
elements $d\in D$ satisfying
\[
|d-c|\le\rho_0,\qquad d-c\text{ odd}.
\]
Fix a set $P$ of exactly $r$ such elements. Since $\rho_0\le3L/10$,
\[
w(d)\ge0.7\qquad(d\in P).
\]
Put
\[
Q_P=\sum_{d\in P}w(d),\qquad M=\sum_{d\in D\setminus P}w(d)=\kappa+R(0)-Q_P,\qquad\theta=\frac kM ,
\]
where $|R(0)|\le\sqrt q$ by Lemma~\ref{lem:tent} at $y=0$. Since $0\le Q_P\le r$, we have $\kappa-\sqrt q-r\le M\le\kappa+\sqrt q$, and for sufficiently large $k$ the inequalities $4k<q\le8k$, $(q+1)/2\le\kappa\le(q+2)/2$ and $r=o(k)$ imply
\begin{equation}\label{eq:theta}
\frac15\le\theta\le1 .
\end{equation}

For $d\in D\setminus P$, independently choose $\xi_d\in\{0,1\}$ with
\[
\PP(\xi_d=1)=p_d=\theta w(d),
\]
and put $\xi_d=0$ on $P$. Define
\[
S=\{d-c:\xi_d=1\},\qquad T(y)=\sum_{s\in S}e^{isy}.
\]
Then
\[
\E|S|=\sum_{d\notin P}\theta w(d)=k .
\]

\begin{lemma}[Uniform noise bound]\label{lem:noise}
Let
\[
\mathcal D(y)=T(y)-\E T(y).
\]
Then
\[
\PP\Bigl(\sup_{y\in\R}|\mathcal D(y)|>\Delta\Bigr)\le\frac1{16\sqrt{q+1}} .
\]
\end{lemma}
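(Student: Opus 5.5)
The plan is a standard net-plus-Hoeffding argument. Write $\mathcal D(y)=\sum_{d\in D\setminus P}(\xi_d-p_d)e^{i(d-c)y}$. This is a trigonometric polynomial whose frequencies lie in $[-c,c]$, hence of degree at most $c<v$. Its coefficients are independent, mean zero, and bounded by $1$ in absolute value, and there are at most $q+1$ of them.

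\emph{Step 1 (fixed $y$).} Fix a point $y$ and bound the real and imaginary parts of $\mathcal D(y)$ separately. For instance,
\[
\operatorname{Re}\mathcal D(y)=\sum_d(\xi_d-p_d)\cos((d-c)y)
\]
is a sum of independent mean-zero terms, each lying in an interval of length at most $1$. Hoeffding's inequality therefore gives
\[
\PP\bigl(|\operatorname{Re}\mathcal D(y)|>t\bigr)\le2\exp\bigl(-2t^2/(q+1)\bigr),
\]
and the same bound holds for the imaginary part. Taking $t=\Delta_0/\sqrt2$ in each part, we get $|\mathcal D(y)|\le\Delta_0$ except with probability at most $4\exp(-\Delta_0^2/(q+1))$.

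\emph{Step 2 (net).} Take the grid $y_\ell=2\pi\ell/N$ for $0\le\ell<N$, where $N=v(q+1)$. A union bound over this grid with $\Delta_0=\sqrt{(q+1)\log(64N\sqrt{q+1})}$ gives failure probability at most
\[
N\cdot4\cdot\frac{1}{64N\sqrt{q+1}}=\frac1{16\sqrt{q+1}}.
\]
This matches the claimed bound exactly, which strongly suggests that this is the intended normalisation.

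\emph{Step 3 (interpolation).} For any real $y$ there is a grid point with $|y-y_\ell|\le\pi/N$. We have the crude derivative bound
\[
|\mathcal D'(y)|\le\sum_d|\xi_d-p_d|\,|d-c|\le(q+1)c<(q+1)v/2,
\]
so
\[
|\mathcal D(y)-\mathcal D(y_\ell)|\le\frac{(q+1)v}2\cdot\frac{\pi}{N}=\frac\pi2 .
\]
Hence $\sup_y|\mathcal D(y)|\le\Delta_0+\pi/2=\Delta$ on the good event. This accounts for the $+\pi/2$ in the definition of $\Delta$. Since $\mathcal D$ is $2\pi$-periodic (all frequencies are integers), the supremum over $\mathbb R$ equals the supremum over one period.

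\emph{Main obstacle.} The only delicate point is the constant bookkeeping in Step 1. Splitting into real and imaginary parts costs a factor $\sqrt2$ in the threshold, and a two-sided Hoeffding bound costs a factor $2$. To land exactly on $\exp(-\Delta_0^2/(q+1))$ with the constant $4$ that cancels against $64$, I use the range-$1$ form of Hoeffding: each centred summand $(\xi_d-p_d)\cos(\cdot)$ lies in an interval of length $|\cos(\cdot)|\le1$. If the constants were slightly off, I would instead apply a vector Bernstein/Hoeffding bound to $|\mathcal D(y)|$ directly, or use a finer grid; the slack in $\log(64N\sqrt{q+1})$ absorbs any constant loss.
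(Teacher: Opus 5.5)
Your proposal is correct and follows the paper's proof essentially step for step. Both arguments apply Hoeffding separately to the real and imaginary parts to get $4\exp(-t^2/(q+1))$ at a fixed point, take a union bound over the grid $2\pi\ell/N$ with $t_0=\sqrt{(q+1)\log(64N\sqrt{q+1})}$, and interpolate with the derivative bound $c(q+1)$, which gives the $+\pi/2$. The only difference is minor: you bound $\|\mathcal D'\|_\infty\le(q+1)c$ term by term, whereas the paper cites Bernstein's inequality together with $\|\mathcal D\|_\infty\le q+1$; both give the same constant.
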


\begin{proof}
For fixed $y$, Hoeffding's inequality applied separately to the real and imaginary parts gives
\[
\PP\bigl(|\mathcal D(y)|\ge t\bigr)\le4\exp\Bigl(-\frac{t^2}{q+1}\Bigr).
\]
Take the grid $y_\ell=2\pi\ell/N$, $0\le\ell<N$, and
\[
t_0=\sqrt{(q+1)\log\bigl(64N\sqrt{q+1}\bigr)} .
\]
A union bound gives grid failure probability at most $1/(16\sqrt{q+1})$. Since $\mathcal D$ has degree at most $c$ and $\|\mathcal D\|_\infty\le q+1$, Bernstein's inequality gives $\|\mathcal D'\|_\infty\le c(q+1)$. Every $y$ lies within $\pi/N$ of a grid point, hence
\[
|\mathcal D(y)-\mathcal D(y_\ell)|\le\frac\pi Nc(q+1)=\frac{\pi c}v<\frac\pi2 .
\]
Thus the grid bound $t_0$ implies the uniform bound $\Delta=t_0+\pi/2$.
\end{proof}

\begin{lemma}[Exact cardinality]\label{lem:exact}
Let $X$ be a sum of $s$ independent Bernoulli variables and suppose $\E X=k\in\Z$. Then
\[
\PP(X=k)\ge\frac1{4\sqrt s} .
\]
\end{lemma}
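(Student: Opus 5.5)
The plan is to combine two facts about the Poisson-binomial distribution of $X$: its mode sits exactly at the integer mean $k$, and Chebyshev's inequality puts a constant fraction of its mass in a window of length $O(\sqrt s)$ around $k$. Since the mode carries at least the average mass of the integers in that window, this gives the bound.

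\emph{Step 1: the mode is $k$.} Let $p_1,\dots,p_s$ be the success probabilities and $\pi_j=\PP(X=j)$. The generating polynomial $\prod_i(1-p_i+p_iz)$ has only real nonpositive roots, so Newton's inequalities give $\pi_j^2\ge\pi_{j-1}\pi_{j+1}$. Hence $(\pi_j)$ is log-concave and, in particular, unimodal. To locate the mode I would invoke Darroch's theorem (1964) for sums of independent Bernoulli variables: when $\E X=k$ is an integer, $k$ is a mode, so
\[
\pi_k=\max_j\pi_j .
\]
If a self-contained argument is preferred, I would first try Samuels' approach, an induction on $s$ that shows the mode differs from the mean by less than $1$. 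Combined with log-concavity, this rules out a mode at $k\pm1$ that strictly dominates $\pi_k$. This step is where I expect the main obstacle, since the rest is mechanical. Degenerate variables with $p_i\in\{0,1\}$ simply shift $X$ and can be discarded, and the case $s=0$ is vacuous.

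\emph{Step 2: concentration.} We have $\operatorname{Var}X=\sum_ip_i(1-p_i)\le s/4$. Chebyshev's inequality with $t=\sqrt s$ gives
\[
\PP\bigl(|X-k|\le\sqrt s\bigr)\ge1-\frac{s/4}{s}=\frac34 .
\]
The interval $[k-\sqrt s,k+\sqrt s]$ contains at most $2\sqrt s+1\le3\sqrt s$ integers (using $s\ge1$).

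\emph{Step 3: combine.} Each integer in the window has probability at most $\pi_k$ by Step~1. Therefore
\[
\frac34\le3\sqrt s\,\pi_k ,
\]
which gives $\pi_k\ge1/(4\sqrt s)$, as claimed.
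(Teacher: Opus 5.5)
Your proposal is correct and is essentially the paper's argument: Darroch's theorem places a mode at the integer mean $k$, Chebyshev puts mass at least $3/4$ in a window containing $O(\sqrt s)$ integers, and the mode carries at least the average mass of that window. The only difference is cosmetic: you use the window $|X-k|\le\sqrt s$ rather than the paper's $|X-k|<2\sigma$, which makes the paper's separate case $\sigma=0$ unnecessary.
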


\begin{proof}
By Darroch's theorem \cite{Dar64} on the mode of a Poisson-binomial distribution, the integer mean $k$ is a mode of $X$. Let $\sigma^2=\operatorname{Var}(X)\le s/4$. If $\sigma=0$, the assertion is immediate. Otherwise Chebyshev's inequality gives
\[
\PP(|X-k|<2\sigma)\ge\frac34 .
\]
The interval $(k-2\sigma,k+2\sigma)$ contains at most $4\sigma+1$ integers, so one of them has probability at least
\[
\frac3{4(4\sigma+1)}\ge\frac1{4\sqrt s}.
\]
Since $k$ is a mode, the same lower bound holds at $k$.
\end{proof}

Let $G$ be the event $\sup_y|\mathcal D(y)|\le\Delta$. Here $s\le q+1$, and therefore Lemmas~\ref{lem:noise} and~\ref{lem:exact} give
\[
\PP(G^c)\le\frac1{16\sqrt{q+1}}<\frac1{4\sqrt s}\le\PP(|S|=k).
\]
Hence there exists a realization satisfying simultaneously
\begin{equation}\label{eq:realization}
|S|=k,\qquad\sup_y|\mathcal D(y)|\le\Delta .
\end{equation}
Fix such a realization from now on.

\section{The three-zone estimate}\label{sec:zones}

By Lemma~\ref{lem:tent},
\[
\E T(y)=\theta\sum_{d\in D\setminus P}w(d)e^{i(d-c)y}=\theta\bigl(m(y)+R(y)-Q(y)\bigr),
\]
where
\[
Q(y)=\sum_{d\in P}w(d)e^{i(d-c)y}.
\]
Thus
\begin{equation}\label{eq:T-decomp}
T(y)=\theta m(y)+E(y),\qquad|E(y)|\le\Lambda,
\end{equation}
because $|E|\le\theta\sqrt q+\theta Q_P+\Delta\le\sqrt q+r+\Delta=\Lambda$. Since $d-c$ is odd on $P$, for every real $t$,
\begin{equation}\label{eq:T-shifted}
T(\pi+3t)=\theta\bigl(m(\pi+3t)+R(\pi+3t)+Q_0(t)\bigr)+\mathcal D(\pi+3t),
\end{equation}
where
\[
Q_0(t)=\sum_{d\in P}w(d)e^{3i(d-c)t}.
\]
Moreover, since $\rho(x)=\operatorname{Re}\bigl(e^{ix/2}T(3x)\bigr)$ and $g(x)=\operatorname{Re}\bigl(e^{ix}T(6x)\bigr)$,
\begin{equation}\label{eq:rho-g}
\rho(x)\ge\cos(x/2)\theta m(3x)-\Lambda,\qquad g(x)\le\cos(x)\theta m(6x)+\Lambda .
\end{equation}
Because $f_A$ is even and $2\pi$-periodic, it suffices to consider $x\in[0,\pi]$. Write $A=A(S)$ and $f_A=\rho^2-k/2-g/2$ (Proposition~\ref{prop:identity}).

\subsection*{Zone I: $0\le x\le\tau_0$}
Put $\mu=\theta m(3x)$. By \eqref{eq:doubling} and $\tau_0\le0.05$,
\[
\theta m(6x)\le1.024\mu,\qquad\cos(x/2)\ge0.998 .
\]
Therefore
\[
g(x)\le1.024\mu+\Lambda,\qquad\rho(x)\ge0.998\mu-\Lambda .
\]
If $\mu\le4\Lambda$, then $g(x)\le5.1\Lambda$, and hence
\[
f_A(x)\ge-\frac k2-2.55\Lambda .
\]
If $\mu>4\Lambda$, then
\[
\rho(x)\ge0.748\mu,
\]
so $\rho(x)^2\ge0.559\mu^2\ge0.637\mu\ge g(x)/2$ (as $\mu>4\Lambda\ge2$), and in particular
\[
f_A(x)\ge-\frac k2 .
\]

\subsection*{Zone II: $x=\pi/3+t$, $|t|\le\tau_0$}
From \eqref{eq:rho-g}, $\cos x\le0.55$ and $m\le\kappa\le(q+2)/2$,
\[
\frac{g(x)}2\le0.15(q+2)+\frac\Lambda2 .
\]
Set $\phi=\pi/6+t/2$. By \eqref{eq:T-shifted},
\begin{align*}
\rho(x)={}&\theta m(\pi+3t)\cos\phi+\operatorname{Re}\Bigl(e^{i\phi}\bigl(\theta R(\pi+3t)+\mathcal D(\pi+3t)\bigr)\Bigr)\\
&+\theta\sum_{d\in P}w(d)\cos\bigl(\phi+3(d-c)t\bigr).
\end{align*}
The first term is nonnegative and the second is at least $-\Delta'$. For $d\in P$, \eqref{eq:params1} gives
\[
|\phi+3(d-c)t|\le\frac\pi6+\frac1{20}+3\rho_0\tau_0\le\frac\pi6+\frac9{20}<0.974,
\]
so
\[
\cos\bigl(\phi+3(d-c)t\bigr)\ge0.56 .
\]
Using $w(d)\ge0.7$, $\theta\ge1/5$, and $r\ge15\Delta'$ gives
\[
\theta\sum_{d\in P}w(d)\cos\bigl(\phi+3(d-c)t\bigr)\ge0.2\cdot0.7\cdot0.56\cdot15\Delta'\ge1.17\Delta' .
\]
Consequently
\[
\rho(x)\ge0.17\Delta'\ge0.09\Delta'\ge\sqrt{0.15(q+2)} .
\]
Thus $\rho(x)^2$ absorbs the potentially positive part of $g(x)/2$, and
\[
f_A(x)\ge-\frac k2-\frac\Lambda2 .
\]

\subsection*{Zone III: the remainder of $[0,\pi]$}
If $x\ge\pi/2$, then $\cos x\le0$, so \eqref{eq:rho-g} gives $g(x)\le\Lambda$ and hence
\[
f_A(x)\ge-\frac k2-\frac\Lambda2 .
\]
If $x<\pi/2$ but $x$ lies outside Zones I and II, then $6x\in(6\tau_0,3\pi)$ has distance at least $\delta:=6\tau_0$ from $2\pi\Z$, so $3x$ has distance at least $\delta/2$ from $\pi\Z$ and
\[
|\sin3x|\ge\sin(\delta/2)\ge0.49\delta ,
\]
using $\delta\le0.3$. By \eqref{eq:mbounds} and the definition of $\tau_0$,
\[
m(6x)\le\frac{q+1}{vL\sin^2(3x)}\le\frac{q+1}{0.24vL\delta^2}=\frac{25}{8.64}\cdot\Lambda\le3\Lambda .
\]
Since $\theta\le1$,
\[
g(x)\le4\Lambda,
\]
and therefore
\[
f_A(x)\ge-\frac k2-2\Lambda .
\]

Combining the three zones gives
\begin{equation}\label{eq:combined}
\min_xf_A(x)\ge-\frac k2-2.55\Lambda=-\frac k2-O\bigl(\sqrt{k\log k}\bigr),
\end{equation}
since $q\asymp k$ and $\Lambda=O(\sqrt{q\log q})$.

\begin{theorem}[Base cardinalities]\label{thm:base}
For every sufficiently large integer $k$ there exists a set $A\subset\N$ with $|A|=k(k-1)$ such that
\[
\min_xf_A(x)\ge-\frac k2-O\bigl(\sqrt{k\log k}\bigr).
\]
Equivalently, $\lambda(k(k-1))\le k/2+O(\sqrt{k\log k})$.
\end{theorem}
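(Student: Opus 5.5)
The proof is essentially an assembly of what has already been established. For a given sufficiently large $k$ I would take the prime $q$ with $4k<q\le8k$ from Section~\ref{sec:thinning}, the Singer set $D$, the parity block $P$, and the realization fixed in \eqref{eq:realization}. That realization has $|S|=k$ and $\sup_y|\mathcal D(y)|\le\Delta$. Two checks are needed before using it.

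\emph{Step 1: $S$ is a valid input to Proposition~\ref{prop:identity}.} The set $S$ is a subset of $D-c$. Every subset of $D-c$ is Sidon over $\Z$ by the remark in Section~\ref{sec:singer}. Since $k\ge2$, Proposition~\ref{prop:identity} applies and gives $A=A(S)\subset\N$ with $|A|=k(k-1)$ and
\[
f_A=\rho^2-\frac k2-\frac12g .
\]

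\emph{Step 2: the realization exists.} This rests on the comparison $\PP(G^c)<\PP(|S|=k)$, which uses Lemma~\ref{lem:noise} and Lemma~\ref{lem:exact} with $s\le q+1$. Lemma~\ref{lem:exact} needs $\E|S|=k$ to be an integer, and this is exactly why $\theta=k/M$ was chosen. I would also confirm that the probabilities $p_d=\theta w(d)$ lie in $[0,1]$, which follows from \eqref{eq:theta}.

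\emph{Step 3: the uniform lower bound.} I would invoke the three-zone estimate of Section~\ref{sec:zones}. By evenness and $2\pi$-periodicity of $f_A$ it suffices to treat $[0,\pi]$, and Zones I, II and III cover this interval. Combining them gives \eqref{eq:combined}:
\[
\min_xf_A(x)\ge-\frac k2-2.55\Lambda .
\]
Then \eqref{eq:params2} gives $\Lambda=O(\sqrt{q\log q})$, and $q\asymp k$ turns this into $O(\sqrt{k\log k})$. The equivalent form $\lambda(k(k-1))\le k/2+O(\sqrt{k\log k})$ is immediate from the definition of $\lambda$ as an infimum.

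\emph{Main obstacle.} The delicate point is the bookkeeping on the parameters, not any new idea. The requirement that $k$ be sufficiently large must simultaneously make \eqref{eq:params1}, \eqref{eq:params2}, \eqref{eq:theta} and the counting inequality for $P$ hold. The parity block also needs $r=o(k)$ and $\rho_0\le 3L/10$. I would check these asymptotically, using $h_0\asymp q^{3/2}\sqrt{\log q}$, $L\asymp q^2$ and $\rho_0\tau_0\to0$. I would make sure each threshold depends only on $k$, with absolute implied constants, so that one threshold on $k$ serves for all of them.
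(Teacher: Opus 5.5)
Your proposal is correct and follows the paper's own route: fix the realization of \eqref{eq:realization}, apply Proposition~\ref{prop:identity} to the Sidon set $S\subset D-c$, and combine the three zone bounds into \eqref{eq:combined}, then use $\Lambda=O(\sqrt{q\log q})$ and $q\asymp k$. The side checks you list are exactly the ones the paper relies on: $p_d\in[0,1]$, integrality of $\E|S|=k$ for Lemma~\ref{lem:exact}, and the validity for large $k$ of \eqref{eq:params1}, \eqref{eq:params2}, \eqref{eq:theta} and the block-count inequality.
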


This proves Theorem~\ref{thm:base-intro}.

\begin{remark}\label{rem:constants}
The constants in the definitions of Section~\ref{sec:thinning} are chosen for simplicity of verification, not for size: \eqref{eq:params1} forces $q$ to be of order at least $10^5$, and at such sizes $2.55\Lambda$ exceeds $150\sqrt{k\log k}$. The bound is therefore asymptotic in nature. The values actually attained by the construction at moderate sizes are far better; see Section~\ref{sec:numerics}.
\end{remark}

\section{Passage to every cardinality}\label{sec:passage}

\begin{lemma}[Subadditivity]\label{lem:subadd}
For nonnegative integers $a,b$,
\[
\lambda(a+b)\le\lambda(a)+\lambda(b).
\]
\end{lemma}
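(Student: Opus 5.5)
The plan is to realize the union of two sets as a disjoint union after dilating one of them, using that dilation $x\mapsto Nx$ does not change the minimum of a cosine sum. The edge cases come first: with $\lambda(0)=0$, coming from the empty sum $f_\emptyset\equiv0$, the inequality is trivial when $a=0$ or $b=0$. So assume $a,b\ge1$.

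Fix $\varepsilon>0$ and choose sets $A_1,A_2\subset\N$ with $|A_1|=a$, $|A_2|=b$ such that
\[
-\min_x f_{A_1}(x)\le\lambda(a)+\varepsilon,\qquad -\min_x f_{A_2}(x)\le\lambda(b)+\varepsilon .
\]
Let $N$ be an integer larger than $\max A_1$, and set $A_2'=N\cdot A_2=\{Nb':b'\in A_2\}$. Every element of $A_2'$ is at least $N>\max A_1$, so $A_1$ and $A_2'$ are disjoint and $A=A_1\cup A_2'$ has exactly $a+b$ elements. Moreover $f_{A_2'}(x)=f_{A_2}(Nx)$. As $x$ ranges over $\R$, so does $Nx$, hence $\min_x f_{A_2'}=\min_x f_{A_2}$.

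Disjointness gives $f_A=f_{A_1}+f_{A_2'}$ pointwise, so
\[
\min_x f_A\ge\min_x f_{A_1}+\min_x f_{A_2'}\ge-\lambda(a)-\lambda(b)-2\varepsilon .
\]
Therefore $\lambda(a+b)\le\lambda(a)+\lambda(b)+2\varepsilon$, and letting $\varepsilon\to0$ proves the lemma.

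No step is a real obstacle. The only point needing care is that the infimum defining $\lambda$ need not be attained. The $\varepsilon$-approximation handles this; alternatively one could check that the minimum over each fixed cardinality is attained, but that is unnecessary.
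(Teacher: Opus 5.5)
Your proof is correct and matches the paper's: both dilate the second set by an integer exceeding $\max A_1$ to make the union disjoint, write $f_{A_1\cup NA_2}(x)=f_{A_1}(x)+f_{A_2}(Nx)$, and bound the minimum termwise. Your $\varepsilon$-approximation spells out the ``taking infima'' step that the paper leaves implicit.
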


\begin{proof}
The cases $a=0$ or $b=0$ are immediate. For $a,b\ge1$, choose finite sets $A,B\subset\N$ of sizes $a,b$. Choose an integer $t>\max A$ sufficiently large that $A\cap tB=\emptyset$. Then
\[
f_{A\cup tB}(x)=f_A(x)+f_B(tx),
\]
so
\[
\min_xf_{A\cup tB}(x)\ge\min_xf_A(x)+\min_yf_B(y).
\]
Taking infima proves the claim.
\end{proof}

We use the explicit form of the Sidon-difference estimate \cite[Fact~2.1]{Mer19},
\begin{equation}\label{eq:mercer}
\lambda(m)\le\sqrt{2m}+\frac12\qquad(m\ge1).
\end{equation}

\begin{theorem}[Half-constant upper bound]\label{thm:main}
As $n\to\infty$,
\[
\lambda(n)\le\frac12\sqrt n+O\bigl(n^{1/4}\sqrt{\log n}\bigr).
\]
Consequently,
\[
\limsup_{n\to\infty}\frac{\lambda(n)}{\sqrt n}\le\frac12 .
\]
\end{theorem}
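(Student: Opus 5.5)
Given large $n$, I will write $n = k(k-1) + m$ with $k$ as large as possible, so that $0\le m<2k$, and then combine Theorem~\ref{thm:base} with Lemma~\ref{lem:subadd} and the explicit bound \eqref{eq:mercer}. Concretely, I take $k=\max\{j: j(j-1)\le n\}$. Then $k(k-1)\le n<(k+1)k$, so $m=n-k(k-1)<2k$. Also $k=\sqrt n+O(1)$: from $(k-\tfrac12)^2\le n+\tfrac14$ I get $k\le\sqrt{n+1/4}+\tfrac12$, and from $(k+\tfrac12)^2>n$ I get $k>\sqrt n-\tfrac12$. In particular $k\to\infty$ with $n$, so Theorem~\ref{thm:base} applies once $n$ is large.

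Lemma~\ref{lem:subadd} gives
\[
\lambda(n)\le\lambda\bigl(k(k-1)\bigr)+\lambda(m).
\]
If $m=0$ the second term is $\lambda(0)=0$, which is the trivial case. Otherwise I bound the two terms separately. By Theorem~\ref{thm:base},
\[
\lambda\bigl(k(k-1)\bigr)\le \frac k2+O\bigl(\sqrt{k\log k}\bigr)\le\frac12\sqrt n+O(1)+O\bigl(n^{1/4}\sqrt{\log n}\bigr),
\]
since $k\asymp\sqrt n$ implies $\sqrt{k\log k}\asymp n^{1/4}\sqrt{\log n}$. By \eqref{eq:mercer} with $m<2k$,
\[
\lambda(m)\le\sqrt{2m}+\frac12\le 2\sqrt k+\frac12=O\bigl(n^{1/4}\bigr).
\]
Adding the two bounds gives $\lambda(n)\le\frac12\sqrt n+O(n^{1/4}\sqrt{\log n})$. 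Dividing by $\sqrt n$ and letting $n\to\infty$ then gives the $\limsup$ statement.

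There is no real obstacle here. The only point to watch is that the remainder $m$ is $O(\sqrt n)$, so its Sidon-difference cost is $O(n^{1/4})$, which is absorbed by the error term of Theorem~\ref{thm:base}. This works because the base cardinalities $k(k-1)$ have consecutive gaps $2k=O(\sqrt n)$. I also need the implicit constant uniform in $n$. This holds because Theorem~\ref{thm:base} holds for all $k\ge k_0$, and the finitely many smaller $n$ are absorbed into the $O$.
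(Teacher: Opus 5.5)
Your proposal is correct and follows essentially the same route as the paper: choose $k$ maximal with $k(k-1)\le n$, split $n=k(k-1)+m$ with $0\le m<2k$, and bound $\lambda(n)\le\lambda(k(k-1))+\lambda(m)$ via Lemma~\ref{lem:subadd}, Theorem~\ref{thm:base} and \eqref{eq:mercer}. You also spell out details the paper leaves implicit, namely the explicit bounds showing $k=\sqrt n+O(1)$, the case $m=0$, and the $O(n^{1/4})$ size of the remainder term.
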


\begin{proof}
Let $k$ be maximal with $k(k-1)\le n$ and put
\[
m=n-k(k-1).
\]
Then $0\le m<2k$ and $k=\sqrt n+O(1)$. By Theorem~\ref{thm:base}, Lemma~\ref{lem:subadd}, and \eqref{eq:mercer},
\begin{align*}
\lambda(n)&\le\lambda(k(k-1))+\lambda(m)\\
&\le\frac k2+O\bigl(\sqrt{k\log k}\bigr)+O(\sqrt m)\\
&=\frac12\sqrt n+O\bigl(n^{1/4}\sqrt{\log n}\bigr).
\end{align*}
Dividing by $\sqrt n$ and letting $n\to\infty$ gives the limsup statement.
\end{proof}

This proves Theorem~\ref{thm:main-intro}.

\section{Limits of the method}\label{sec:limits}

All known constructions, including the one above, are of the form ``square of a trigonometric polynomial, minus the diagonal, minus a correction that is handled analytically''. We record an exact identity that shows what such a construction must achieve in order to have leading constant below $1/2$. For a trigonometric polynomial $F(x)=\sum_\gamma c_\gamma e^{i\gamma x}$ with real (not necessarily integer) frequencies $\gamma$, write $\Mean(F)=c_0$ for its mean; thus $\Mean(F)=\lim_{T\to\infty}\frac1{2T}\int_{-T}^TF$.

\begin{proposition}[$L^4$ criterion]\label{prop:L4}
Let $A\subset\N$ be finite with $|A|=n$, and suppose that for some $\lambda\ge0$, some trigonometric polynomial $P$ with complex coefficients and real frequencies, and some real trigonometric polynomial $h$ with $\Mean(h)=0$ whose frequency support is disjoint from $A\cup(-A)$,
\[
\lambda+f_A(x)=|P(x)|^2-h(x)\qquad(x\in\R).
\]
Then $\Mean(|P|^2)=\lambda$, $\Mean(|P|^4)=\lambda^2+\frac n2+\Mean(h^2)$, and
\[
\frac{\lambda}{\sqrt n}=\frac{\Mean(|P|^2)}{\sqrt{2\bigl(\Mean(|P|^4)-\Mean(|P|^2)^2-\Mean(h^2)\bigr)}} .
\]
In particular $\lambda<\frac12\sqrt n$ if and only if
\[
\Mean(|P|^4)-\Mean(h^2)>3\Mean(|P|^2)^2 .
\]
\end{proposition}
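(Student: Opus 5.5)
The argument will take Bohr means of both sides of the identity $|P|^2=\lambda+f_A+h$, first directly and then after squaring. Throughout I use that $\Mean$ is linear, and that for trigonometric polynomials with real frequencies $\Mean(F\overline G)=\sum_\gamma c_\gamma\overline{d_\gamma}$ (Parseval for almost periodic polynomials). This holds because $\Mean(e^{i\gamma x})=0$ for every real $\gamma\ne0$.

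\emph{First moment.} Since $A\subset\N$, the frequencies of $f_A$ are $\pm a\ne0$, so $\Mean(f_A)=0$. By hypothesis $\Mean(h)=0$. Taking means in $|P|^2=\lambda+f_A+h$ therefore gives $\Mean(|P|^2)=\lambda$.

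\emph{Second moment.} Squaring the real identity gives
\[
|P|^4=\lambda^2+f_A^2+h^2+2\lambda f_A+2\lambda h+2f_Ah.
\]
The linear terms have mean zero by the first step. Next, $\Mean(f_A^2)=n/2$, because $f_A=\frac12\sum_{a\in A}(e^{iax}+e^{-iax})$ has $2n$ distinct frequencies, each with coefficient $\frac12$; distinctness uses $A\subset\N$, so that $a\ne-a'$. The cross term $\Mean(f_Ah)$ is the sum of products of coefficients of $f_A$ at $\gamma$ with coefficients of $h$ at $-\gamma$. Here $\gamma$ ranges over $\pm A$, and $h$ has no frequency in $A\cup(-A)$, so every product vanishes and $\Mean(f_Ah)=0$. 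Hence $\Mean(|P|^4)=\lambda^2+\frac n2+\Mean(h^2)$. This cross-term step is the only place the disjointness hypothesis enters, and it is the one point that needs care: one must check that the support condition is symmetric under $\gamma\mapsto-\gamma$, which it is, since $A\cup(-A)$ is symmetric.

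\emph{The displayed formula.} From the two moments, $n=2\bigl(\Mean(|P|^4)-\Mean(|P|^2)^2-\Mean(h^2)\bigr)$. Substituting $\lambda=\Mean(|P|^2)$ into $\lambda/\sqrt n$ gives the stated expression. Finally, $\lambda<\frac12\sqrt n$ is equivalent to $4\lambda^2<n$, since both sides are nonnegative. Using the second-moment identity this becomes
\[
4\Mean(|P|^2)^2<2\bigl(\Mean(|P|^4)-\Mean(|P|^2)^2-\Mean(h^2)\bigr),
\]
which rearranges to $\Mean(|P|^4)-\Mean(h^2)>3\Mean(|P|^2)^2$. No genuine obstacle arises beyond justifying the Parseval identity for real-frequency polynomials, which is immediate from orthogonality of the characters $e^{i\gamma x}$ under $\Mean$.
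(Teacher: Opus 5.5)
Your proposal is correct and follows the paper's proof essentially line for line: take means to get $\Mean(|P|^2)=\lambda$, then square and use $\Mean(f_A)=\Mean(h)=\Mean(f_Ah)=0$ together with $\Mean(f_A^2)=n/2$, then rearrange. Your remarks on Parseval for real-frequency polynomials and on the symmetry of $A\cup(-A)$ only make explicit what the paper leaves implicit.
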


\begin{proof}
Taking means, $\Mean(f_A)=0$ and $\Mean(h)=0$ give $\Mean(|P|^2)=\lambda$. Squaring, $\Mean(|P|^4)=\Mean\bigl((\lambda+f_A+h)^2\bigr)=\lambda^2+\Mean(f_A^2)+\Mean(h^2)$, because $\Mean(f_A)=\Mean(h)=0$ and $\Mean(f_Ah)=0$ by the disjointness of supports. Finally $\Mean(f_A^2)=n/2$, since the $n$ frequencies of $f_A$ are distinct positive integers. The displayed formula and the equivalence follow.
\end{proof}

In Proposition~\ref{prop:identity} one has $P=\rho$, $\lambda=k/2$, $h=g/2$, $\Mean(g^2)=k/2$, and $\Mean(\rho^4)=\frac34k^2-\frac38k$, so the criterion reads $\frac34k^2-\frac12k>\frac34k^2$, which fails; this is consistent with $\lambda/\sqrt n=\frac12(1-1/k)^{-1/2}$.

For a finite $B\subset\R$ and $\gamma\in\R$ write $r_B(\gamma)=\#\{(\beta,\beta')\in B^2:\beta-\beta'=\gamma\}$. When $P$ has coefficients of constant modulus, the criterion takes a sharper, pointwise form.

\begin{proposition}[Multiplicity bound]\label{prop:multiplicity}
Let $A\subset\N$ be finite with $|A|=n$, and suppose that $\lambda+f_A=|P|^2-h$ with $h$ as in Proposition~\ref{prop:L4} and
\[
P(x)=c\sum_{\beta\in B}\varepsilon_\beta e^{i\beta x},\qquad c>0,\quad|\varepsilon_\beta|=1,
\]
for a finite set $B\subset\R$. Put $m=1/(2c^2)$. Then for every $\gamma\in A\cup(-A)$,
\[
\sum_{\beta-\beta'=\gamma}\varepsilon_\beta\overline{\varepsilon_{\beta'}}=m,\qquad\text{so in particular}\qquad r_B(\gamma)\ge m,
\]
and
\[
\lambda\ \ge\ \sqrt{\frac{n}{2m(1-1/|B|)}}\ >\ \sqrt{\frac n{2m}} .
\]
Consequently $\lambda<\frac12\sqrt n$ forces $m>2$: every frequency of $A$ then has at least three representations as a difference of elements of $B$.
\end{proposition}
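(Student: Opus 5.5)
Expand $|P|^2$ and compare Fourier coefficients frequency by frequency, then feed the result into the $L^4$ identity of Proposition~\ref{prop:L4}. Write
\[
|P(x)|^2=c^2\sum_{\gamma}\Bigl(\sum_{\beta-\beta'=\gamma}\varepsilon_\beta\overline{\varepsilon_{\beta'}}\Bigr)e^{i\gamma x}=:c^2\sum_\gamma a_\gamma e^{i\gamma x}.
\]
For $\gamma\in A\cup(-A)$ the coefficient of $e^{i\gamma x}$ in $\lambda+f_A+h$ is exactly $1/2$: the frequencies of $f_A$ are distinct positive integers, $\gamma\ne0$, and $h$ has no support there. Hence $c^2a_\gamma=1/2$, i.e.\ $a_\gamma=m$. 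Since $a_\gamma$ is a sum of $r_B(\gamma)$ unimodular terms, $m=|a_\gamma|\le r_B(\gamma)$.

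For the lower bound on $\lambda$, take means. The constant coefficient gives $\lambda=\Mean(|P|^2)=c^2|B|$, so $|B|=\lambda/c^2=2m\lambda$. Next use a counting (Parseval) bound on the nondiagonal coefficients:
\[
\sum_{\gamma\ne0}|a_\gamma|^2\le\sum_{\gamma\ne0}r_B(\gamma)^2\le\Bigl(\max_{\gamma\neq0}r_B(\gamma)\Bigr)\sum_{\gamma\ne0}r_B(\gamma).
\]
This route seems lossy, so instead use the elementary bound
\[
\sum_{\gamma\ne0}|a_\gamma|\le\sum_{\gamma\ne0}r_B(\gamma)=|B|^2-|B|.
\]
Since $|a_\gamma|=m$ on the $2n$ frequencies $\pm A$, this gives $2nm\le|B|(|B|-1)$. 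Substituting $|B|=2m\lambda$:
\[
2nm\le 2m\lambda\,|B|(1-1/|B|)=4m^2\lambda^2(1-1/|B|),
\]
so $\lambda^2\ge n/\bigl(2m(1-1/|B|)\bigr)$, which is the stated inequality, and it is strict against $\sqrt{n/(2m)}$.

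For the consequence: $\lambda<\frac12\sqrt n$ together with $\lambda>\sqrt{n/(2m)}$ gives $1/(2m)<1/4$, i.e.\ $m>2$. Then $r_B(\gamma)\ge m>2$, and since $r_B(\gamma)$ is an integer, $r_B(\gamma)\ge3$.

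The main point to check is the coefficient comparison. Frequencies are real, so $\Mean$-orthogonality of $e^{i\gamma x}$ for distinct real $\gamma$ justifies identifying coefficients. The only subtle step is making sure $h$ contributes nothing at $\pm A$ and at $0$, and both follow from the hypotheses $\Mean(h)=0$ and the disjoint support.
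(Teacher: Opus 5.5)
Your proof is correct and is essentially the paper's own argument: match coefficients on $A\cup(-A)$ to get the sum equal to $m$, take means to get $|B|=2m\lambda$, and sum $m\le r_B(\gamma)$ over the $2n$ frequencies against $\sum_{\gamma\ne0}r_B(\gamma)=|B|^2-|B|$. The opening appeal to the $L^4$ identity and the Parseval detour play no role and can be dropped, since only the mean of $|P|^2$ and its coefficients on $A\cup(-A)$ are used.
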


\begin{proof}
Taking means in $\lambda+f_A=|P|^2-h$ gives $\lambda=\Mean(|P|^2)=c^2|B|$, because $\Mean(f_A)=\Mean(h)=0$. The coefficient of $e^{i\gamma x}$ in $|P|^2$ is $c^2\sum_{\beta-\beta'=\gamma}\varepsilon_\beta\overline{\varepsilon_{\beta'}}$. For $\gamma\in A\cup(-A)$ the coefficient of $e^{i\gamma x}$ in $\lambda+f_A$ is $\frac12$ and that in $h$ is $0$, so the sum equals $1/(2c^2)=m$, and $m\le r_B(\gamma)$ by the triangle inequality. Summing over the $2n$ elements of $A\cup(-A)$, which are distinct and nonzero,
\[
2nm\ \le\ \sum_{\gamma\in A\cup(-A)}r_B(\gamma)\ \le\ \sum_{\gamma\ne0}r_B(\gamma)=|B|^2-|B| .
\]
Since $|B|=\lambda/c^2=2m\lambda$, the right-hand side equals $4m^2\lambda^2(1-1/|B|)$, and the inequality reads $n\le2m\lambda^2(1-1/|B|)$. If $\lambda<\frac12\sqrt n$, then $n/4>\lambda^2\ge n/(2m(1-1/|B|))$ gives $m>2/(1-1/|B|)>2$, and $r_B(\gamma)\ge m>2$ for every $\gamma\in A\cup(-A)$.
\end{proof}

The proposition uses only the mean of $|P|^2$ and its coefficients on $A\cup(-A)$; it places no condition on $h$ beyond the support condition of Proposition~\ref{prop:L4}. For the identity of Proposition~\ref{prop:identity} one has $P=\rho=\frac12\sum_{\beta\in B}e^{i\beta x}$ with $B=\{\pm\alpha_j\}$, so $c=\frac12$, $m=2$, $|B|=2k$, and every frequency of $A(S)$ has $r_B=2$ exactly; the bound reads $\frac k2\ge\frac12\sqrt{k(k-1)\cdot2k/(2k-1)}$, which holds because $2(k-1)<2k-1$. The slack comes from the second inequality in the proof: the $2k$ differences $\pm2\alpha_j$ carry multiplicity one and lie outside $A(S)\cup(-A(S))$. The exact value is $\lambda/\sqrt n=\frac12(1-1/k)^{-1/2}$. The multiplicity two is supplied by the symmetry $\beta\mapsto-\beta$.

\begin{lemma}[Multiplicities of symmetric supports]\label{lem:involution}
Let $B\subset\R\setminus\{0\}$ be finite with $B=-B$. Then for every $\gamma\ne0$,
\[
r_B(\gamma)\equiv\mathbf 1[\gamma/2\in B]\pmod 2 .
\]
\end{lemma}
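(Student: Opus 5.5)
The idea is an involution argument on the set of representations. Fix $\gamma\ne0$ and let
\[
R_\gamma=\{(\beta,\beta')\in B^2:\beta-\beta'=\gamma\},
\]
so that $r_B(\gamma)=|R_\gamma|$. Consider the map $\iota(\beta,\beta')=(-\beta',-\beta)$. Because $B=-B$, both $-\beta'$ and $-\beta$ lie in $B$. Moreover $(-\beta')-(-\beta)=\beta-\beta'=\gamma$, so $\iota$ maps $R_\gamma$ to itself. It is clearly an involution, since $\iota(\iota(\beta,\beta'))=(\beta,\beta')$.

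The parity of $|R_\gamma|$ therefore equals the parity of the number of fixed points of $\iota$. A fixed point satisfies $\beta=-\beta'$. Combined with $\beta-\beta'=\gamma$ this forces $\beta=\gamma/2$ and $\beta'=-\gamma/2$. Conversely, if $\gamma/2\in B$, then $(\gamma/2,-\gamma/2)$ lies in $R_\gamma$, using symmetry of $B$ for the second coordinate, and it is fixed by $\iota$. So there is exactly one fixed point when $\gamma/2\in B$ and none otherwise.

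Hence
\[
r_B(\gamma)\equiv\#\mathrm{Fix}(\iota)=\mathbf 1[\gamma/2\in B]\pmod 2.
\]
The only delicate point is to check that $\iota$ is well defined on $R_\gamma$, which uses $B=-B$, and that the fixed-point set is described exactly. The hypotheses $0\notin B$ and $\gamma\ne0$ are not actually needed for this argument; they only keep the statement in the setting where it is applied.
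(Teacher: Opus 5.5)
Your proof is correct and is essentially the paper's argument: the involution $(\beta,\beta')\mapsto(-\beta',-\beta)$ on the representations of $\gamma$, whose only possible fixed point is $(\gamma/2,-\gamma/2)$. Your side remark that the hypotheses $0\notin B$ and $\gamma\ne0$ are not used is also right.
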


\begin{proof}
The map $(\beta,\beta')\mapsto(-\beta',-\beta)$ is an involution on the set of representations of $\gamma$. Its fixed points are the pairs $(\beta,-\beta)$, which exist if and only if $\gamma=2\beta$ with $\beta\in B$, and then there is exactly one.
\end{proof}

For $B=\{\pm\alpha_j\}$ with $\alpha_j=3s_j+\frac12$ and $S$ Sidon, Lemma~\ref{lem:involution} says that every frequency $\alpha_i\pm\alpha_j$ ($i\ne j$) has multiplicity exactly two while the frequencies $2\alpha_j=6s_j+1$ have multiplicity one; the latter are precisely the frequencies of the correction $g$ in Proposition~\ref{prop:identity}. Thus the identity of Section~\ref{sec:identity} is the extremal case of Lemma~\ref{lem:involution}.

\begin{remark}\label{rem:barrier}
Proposition~\ref{prop:multiplicity} bounds the constant $\lambda$ of the representation, whereas the quantity of interest is
\[
-\min_xf_A(x)=\lambda+\sup_x\bigl(h(x)-|P(x)|^2\bigr).
\]
Within the framework of Proposition~\ref{prop:L4} a leading constant below $\frac12$ can therefore arise in only two ways: either every frequency of $A$ has multiplicity $m>2$ in $B$ in the sense of Proposition~\ref{prop:multiplicity}, or $h\le|P|^2-\varepsilon\lambda$ on all of $\R$ for some fixed $\varepsilon>0$, that is, the correction is negative by a fixed proportion of $\lambda$ at every zero of $P$. We know of no support $B$ for which either can be carried out. For the first route, $\beta\mapsto-\beta$ is the only nontrivial automorphism of $\Z$ and supplies exactly multiplicity two (Lemma~\ref{lem:involution}), so larger multiplicities must come from additive coincidences in $B$; in the examples we have examined these are accompanied by a comparable number of frequencies whose coefficient in $|P|^2$ is not $1$, and the resulting $h$ is of order $\lambda$ rather than $o(\lambda)$. For a union of translates $B=C+D$ with $C$ Sidon, the multiplicities follow the profile of $r_D$, which is never constant on $D-D$; for the lift to $\Z$ of a cyclic difference set with parameter $\lambda_0\ge2$, the $\lambda_0$ representations of each nonzero residue are distributed over two integers. For the second route, in the construction of Section~\ref{sec:zones} the values of $g$ at the zeros of $\rho$ are essentially those of the random part $\mathcal D$ (Remark~\ref{rem:sharp}), and nothing forces them to be negative. Lowering the constant below $\frac12$ therefore appears to require an idea outside the square-identity paradigm.
\end{remark}

\begin{remark}[Sharpness for the present construction]\label{rem:sharp}
For the sets $A=A(S)$ of Theorem~\ref{thm:base}, at every zero $x_0$ of $\rho$ one has $f_A(x_0)=-k/2-g(x_0)/2$, so
\[
-\min_xf_A(x)\ \ge\ \frac k2+\frac12\sup\{g(x_0):\rho(x_0)=0\}.
\]
Thus the leading term $k/2$ could only be improved if $g$ were negative at every zero of $\rho$. Away from the neighbourhoods of $0$, $\pi/3$, $2\pi/3$ and $\pi$ both $\rho$ and $g$ consist of the random part $\mathcal D$ evaluated at $3x$ and $6x$ respectively, whose values at a fixed point are essentially uncorrelated, and $\rho$ has zeros on the scale $1/v$; one therefore expects the supremum above to be positive and of order $\sqrt{k\log k}$, the uniform size of $\mathcal D$, so that both the leading term and the order of the error term in Theorem~\ref{thm:base} should be sharp for this construction up to the logarithm. We do not prove this. The numerical minima in Section~\ref{sec:numerics} are attained at points where $\rho^2<1$, with $g/2$ between $1.05\sqrt k$ and $1.7\sqrt k$ (the penultimate column of Table~\ref{tab:num}).
\end{remark}

\section{Lower bounds: dense sets and the $L^1$ norm}\label{sec:dense}

The sets of Theorem~\ref{thm:base} have all their frequencies below $3v\le200n$ (Section~\ref{sec:thinning}: $S\subset[-c,c]$ with $c=(v-1)/2$, so every frequency $3|s_i-s_j|$ and $|3(s_i+s_j)+1|$ is at most $3(v-1)+1$; and $q\le8k$, $v=q^2+q+1$, $n=k(k-1)$, $k\ge29$ give $3v\le200n$). For sets of this kind a polynomial lower bound follows from Sanders' work on the analogous problem in finite abelian groups \cite{San10}, where for a symmetric $A\subset G$ one considers $M_G(A)=\sup_\gamma(-\widehat{\mathbf 1_A}(\gamma))$, the supremum over the characters of $G$. Sanders proves that a symmetric set $A\ni0$ of density $\alpha$ in $G$ either satisfies $|A\triangle H|\le K$ for some subgroup $H\le G$ or has $M_G(A)\ge K^{\alpha/5}/64$, for every $0<K\le|A|/2^{13}$ \cite[Theorem~4.1]{San10}, and that a symmetric $A\subset\Z/p\Z$ of size $(p+1)/2$ has $M_{\Z/p\Z}(A)\ge cp^{c'}$ with absolute constants $c,c'>0$, where he indicates that $c'=1/3$ can be arranged \cite[Theorem~1.2]{San10}. If $A\subset[1,Cn]$ has $n$ elements, then $A\cup(-A)\cup\{0\}$ embeds in $\Z/p\Z$, for a prime $p\in(4Cn,8Cn]$, as a symmetric set of density $\alpha>1/(4C)$ whose symmetric difference with each of the two subgroups of $\Z/p\Z$ has at least $2n$ elements, and the supremum over the characters of $\Z/p\Z$ is at most $-2\min f_A-1$; hence $-\min f_A\ge cn^{\alpha/5}$ with an absolute $c>0$, an exponent that tends to zero as $C$ grows. Proposition~\ref{prop:dense} below gives the exponent $1/4$ for every $C$, only the constant depending on $C$, with an elementary selection of the auxiliary shift. In the one case where Sanders' bound has an absolute exponent, sets of density exactly $1/2$ in $\Z/p\Z$, his exponent $1/3$ is the better one.

The tool is the boundary estimate of Bedert \cite[Proposition~7.3]{Bed25}. In Section~\ref{subsec:boundary} we reproduce its proof with the $L^1$ norm of the cosine sum carried through, which replaces one factor of $-\min f_A$ by $\|f_A\|_1$ and makes the constant explicit. Section~\ref{subsec:dense} treats sets with frequencies $O(n)$, and Section~\ref{subsec:L1} combines the estimate with Shankar's total-boundary identity \cite{Sha26} to obtain a lower bound for general sets in which two factors of $-\min f_A$ are replaced by $\|f_A\|_1$.

\subsection*{Notation}
Throughout this section $A'\subset\Z\setminus\{0\}$ is a finite symmetric set, $N=|A'|$, $a_m=\mathbf 1_{A'}(m)$, $e(y)=e^{2\pi iy}$, and
\[
F(y)=\widehat{\mathbf 1_{A'}}(y):=\sum_{a\in A'}e(ay),
\]
a real even trigonometric polynomial on $\mathbb T=\R/\Z$. More generally $\widehat{\mathbf 1_X}(y)=\sum_{x\in X}e(xy)$ for finite $X\subset\Z$. The Fourier coefficients of a function $g$ on $\mathbb T$ are $\widehat g(m)=\int_{\mathbb T}g(y)e(-my)\,dy$, so that $\widehat{\widehat{\mathbf 1_X}}=\mathbf 1_X$. All integrals are over $\mathbb T$ with respect to the probability Haar measure, $\|g\|_r=(\int|g|^r)^{1/r}$, and $(g*h)(y)=\int g(y-z)h(z)\,dz$, so that $\widehat{g*h}=\widehat g\widehat h$ and in particular $\widehat{\mathbf 1_X}*\widehat{\mathbf 1_Y}=\widehat{\mathbf 1_{X\cap Y}}$. We have $\int F=0$ and $\int F^2=N$. We fix $K>0$ with
\[
F+K\ge0\quad\text{on }\mathbb T,
\]
and write $u=F+K\ge0$, so that $\int u=K$ and $\widehat u(m)=a_m$ for $m\ne0$; hence $u*u=F+K^2=u+K^2-K$. We put $\mu=\|F\|_1$. Since $\int F=0$ we have $\int F_+=\int F_-=\mu/2$, where $F_\pm=\max(\pm F,0)$, and since $F_-\le K$,
\begin{equation}\label{eq:L1trivial}
\mu\le2K,\qquad \|F_-\|_2^2\le K\mu/2,\qquad \int F_-^{j}\le K^{j-1}\mu/2\quad(j\ge1).
\end{equation}
Moreover $K\ge1$ when $A'\ne\emptyset$: for $a_0\in A'$, $1=\int u(y)e(-a_0y)\,dy\le\int u=K$. For $t\in\Z\setminus\{0\}$ let
\[
B_t(A'):=\{a\in A': a-t\in A',\ a+t\notin A'\},
\]
the set of right endpoints of the maximal arithmetic progressions with common difference $t$ and at least two terms contained in $A'$.

For $A\subset\N$ finite we write $\lambda_A:=-\min_xf_A(x)$ and $\|f_A\|_1:=\frac1{2\pi}\int_0^{2\pi}|f_A|$. With $A'=A\cup(-A)$ one has $F(y)=2f_A(2\pi y)$, so the hypotheses above hold with $N=2n$, $K=2\lambda_A$ and $\mu=2\|f_A\|_1$, and \eqref{eq:L1trivial} reads $\|f_A\|_1\le2\lambda_A$.

\subsection{The boundary estimate}\label{subsec:boundary}

Bedert \cite[Proposition~7.3]{Bed25} proves that $|B_t(A')|\ll K^4$ for every $t\ne0$. His argument gives slightly more: one factor $K$ can be replaced by $\mu=\|F\|_1$, and the constant can be made explicit. Since this refinement is what Theorem~\ref{thm:L1} uses, we give the argument in full. Apart from the bookkeeping of $L^1$ norms it is Bedert's; the only changes are that $\|f_t\|_1\le4\mu$ replaces $\|f_t\|_1\le8K$ in the bound for the function $f_t*f_t$ below, and that the $L^1$ norms of the positive and negative parts of $F$ are kept in his decomposition lemma.

\begin{proposition}[Boundary estimate]\label{prop:boundary}
In the above setting, for every $t\in\Z\setminus\{0\}$,
\[
|B_t(A')|\ \le\ C_1K^3\mu\ \le\ 2C_1K^4,\qquad C_1=2\cdot10^5 .
\]
\end{proposition}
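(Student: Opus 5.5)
Following Bedert's argument, the plan is to express $|B_t(A')|$ as an integral against an auxiliary function built from $F$ and its translates, then bound each piece using $u=F+K\ge0$, $u*u=u+K^2-K$ and the $L^1$ bookkeeping \eqref{eq:L1trivial}. Fix $t\ne0$ and set
\[
f_t(y)=F(y)\bigl(1-e(ty)\bigr),
\]
so that $\widehat{f_t}(m)=a_m-a_{m-t}$. The quantity $|B_t(A')|$ counts the $m$ with $a_m=1$, $a_{m-t}=1$, $a_{m+t}=0$. This is a cubic expression in the indicator:
\[
|B_t(A')|=\sum_m a_ma_{m-t}(1-a_{m+t}).
\]
By Parseval it becomes a correlation of $F$ with its twisted versions, namely an integral of the form $\int F\cdot (F\,e(t\cdot))*\cdots$. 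Equivalently, it is a combination of $\int (f_t*f_t)\,\overline{\cdots}$ and the $L^2$ mass $\int|f_t|^2=2N-2|A'\cap(A'+t)|$.

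The key steps are as follows.

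First, I bound $\|f_t\|_1\le2\|F\|_1=2\mu$. More generously, using $F=u-K$, I get $\|f_t\|_1\le4\mu$, which is the replacement for Bedert's $8K$.

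Second, I write $F=F_+-F_-$ and use Bedert's decomposition lemma, keeping $\int F_\pm=\mu/2$ and $\int F_-^j\le K^{j-1}\mu/2$. This gives $\|f_t*f_t\|_\infty\le\|f_t\|_1\|f_t\|_\infty\le4\mu\cdot 2(N)$. That is too weak on its own, so I will instead pair $f_t*f_t$ against the nonnegative function $u$ and exploit $u*u=u+K^2-K$. Doing so converts sums like $\sum_m \widehat u(m)\widehat{f_t}(m)^2$ into quantities bounded by $K\cdot\|f_t\|_1\cdot\sup|\cdot|$.

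Third, I express the boundary count as a positive-definite combination. Since $\widehat{f_t}(m)^2=a_m+a_{m-t}-2a_ma_{m-t}$, weighting by $a_{m+t}$ and $a_m$ and using the symmetry $A'=-A'$ isolates $|B_t|$ up to terms of the form $\int u\cdot|f_t|^2$ and $\int F_-\cdot|f_t|^2$. Each of these is at most $K^2\cdot\|f_t\|_1\cdot O(K)$, with one of the two $\|f_t\|_1$ factors retaining $\mu$ rather than $2K$.

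Finally, I collect constants to get $C_1K^3\mu$. The second inequality in the statement is immediate from $\mu\le2K$ in \eqref{eq:L1trivial}.

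The main obstacle is the third step: producing a pointwise bound on $f_t$ or $f_t*f_t$ of size $O(K^2)$ rather than $O(N)$, which is where Bedert's argument does its real work. I would follow his route. There, $|f_t|^2\le 2(F^2+F(\cdot)F(\cdot)\cos)$, and the lower bound $F\ge-K$ is used via $F^2\le (F+K)^2 = u^2$ together with $\int u^2\cdot(\text{something})$. I would track exactly one occurrence of $\int u=K$ or $\int F_-$ and upgrade it to $\mu$, checking that no other factor secretly requires $\|F\|_1\le 2K$. The explicit constant $2\cdot10^5$ then comes from careful but routine accounting.
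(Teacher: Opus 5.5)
Your third step is where the whole proof lives, and as set up it cannot work. With $f_t=F(1-e(ty))$ you have $\widehat{f_t}(m)^2=(a_m-a_{m-t})^2$. This only records whether exactly one of $m,m-t$ lies in $A'$, so it cannot tell the right endpoint of a progression of length at least two from an isolated point. Anything obtained by pairing with $u$ therefore also counts one-term progressions. For instance,
\[
\int u\,(f_t*f_t)=Ka_t+\#\{m\in A':m+t\notin A'\},\qquad \int u\,|f_t|^2=2K\,|A'\setminus(A'+t)|+2\int F^3\bigl(1-\cos2\pi ty\bigr),
\]
and the last integral is at least $-2\int F_-^3\ge-K^2\mu$. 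For $t\notin A'-A'$ this gives $\int u|f_t|^2\ge2KN-2K^2\mu$. The bound $\int u|f_t|^2\ll K^3\|f_t\|_1$ that you assert would then yield $N\ll K^2\mu\le2K^3$, i.e.\ $\lambda_A\gg n^{1/3}$ for every $A$, far beyond the known exponent $1/5$. So this is not a matter of careful accounting. Every route that tests against $u$ or $|f_t|^2$ leaves a term of size about $KN$; your Step~2 bound $K\|f_t\|_1\|f_t\|_\infty$ has the same defect, since $\|f_t\|_\infty$ can be of order $N$. In addition, your $f_t$ is complex-valued with no lower bound, so Lemma~\ref{lem:conv}, which is where $\|f_t\|_1\le4\mu$ becomes a pointwise lower bound for $f_t*f_t$, does not apply to it. (Also, $F^2\le u^2$ holds only where $F\ge-K/2$.)

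The underlying difficulty is that $|B_t(A')|=|A'\cap(A'+t)|-r_3(t)$ is a difference of a bilinear and a trilinear count, each possibly of size $N$. The paper (following Bedert) exploits this cancellation with two ingredients that are absent from your plan.

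\emph{First}, the real twist $f_t=2(1+\sin2\pi ty)F$. Because $1+\sin\ge0$, it satisfies $f_t\ge-4K$, $\int f_t=0$ and $\|f_t\|_1\le4\mu$, hence $f_t*f_t\ge-16K\mu$. Its coefficients $2a_m-ia_{m-t}+ia_{m+t}$ are orientation-sensitive and equal $2-i$ on $B_t(A')$. Multiplying by $1-\cos(2\pi ty+\pi/4)$ and splitting into even and odd parts gives $|\varphi_t|\le\psi_t+32K\mu$. Lemma~\ref{lem:coeff} then supplies the gap $-\operatorname{Im}\rho_m\ge4+\sqrt2>4+1/\sqrt2\ge\operatorname{Re}\rho_m$ on $B_t(A')$.

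\emph{Second}, a nonnegative test function whose spectrum sits on $B_t(A')$ but whose $L^1$ norm does not involve $N$. Lemma~\ref{lem:decomp} writes $\widehat{\mathbf 1_{B_t(A')}}=Q_1+Q_2$ with $|Q_1|\le3u$ and $\|Q_2\|_2\le7\sqrt2\,K^{3/2}\mu^{1/2}$; this is the second place $\mu$ enters, through $\|F_\pm\|_1=\mu/2$ and $\|F_-\|_2^2\le K\mu/2$. One integrates the amplitude inequality against $H*H$ with $H=|Q_1|$, so that $\|H\|_1\le3K$. The gap $1/\sqrt2$ between the two sides then gives $|B_t(A')|\ll\|Q_2\|_2^2+K^3\mu\ll K^3\mu$, with no dependence on $N$.
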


We need three lemmas. The first is \cite[Lemma~3.3]{Bed25}.

\begin{lemma}\label{lem:conv}
Let $g,h$ be real integrable functions on $\mathbb T$ with $\int g=\int h=0$, $g\ge-G$ and $h\ge-H$ for constants $G,H\ge0$. Then $g*h\ge-\frac12\bigl(G\|h\|_1+H\|g\|_1\bigr)$.
\end{lemma}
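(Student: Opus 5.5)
The plan is to split $g$ and $h$ into positive and negative parts and use the convolution structure against nonnegative kernels. Write $g=g_+-g_-$ and $h=h_+-h_-$, where $0\le g_-\le G$ and $0\le h_-\le H$. Since $\int g=\int h=0$, we have
\[
\int g_+=\int g_-=\tfrac12\|g\|_1,\qquad \int h_+=\int h_-=\tfrac12\|h\|_1 .
\]
Expanding bilinearly gives
\[
g*h=g_+*h_++g_-*h_- -g_+*h_- -g_-*h_+ .
\]
The first two terms are nonnegative, since they are convolutions of nonnegative functions, so they can be discarded.

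For the two cross terms we use only the pointwise bounds on the negative parts. First, $(g_+*h_-)(y)=\int g_+(y-z)h_-(z)\,dz\le H\int g_+=\frac H2\|g\|_1$. Symmetrically, $(g_-*h_+)(y)\le G\int h_+=\frac G2\|h\|_1$. Combining these gives
\[
g*h\ge-\tfrac12\bigl(G\|h\|_1+H\|g\|_1\bigr),
\]
which is the claim. The only points needing care are that the integrals are finite, which holds because $g,h\in L^1$ and the negative parts are bounded, and that the Haar measure is a probability measure, so that the constants $G,H$ pass through the integrals without extra factors.

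There is no real obstacle. The key observation is simply to avoid bounding $g_+*h_+$ or $g_-*h_-$, and to pair each bounded negative part with the $L^1$ mass of the opposite positive part, using the mean-zero hypothesis to identify $\int g_+$ with $\frac12\|g\|_1$.
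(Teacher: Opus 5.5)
Your proof is correct and is essentially the paper's argument: split into positive and negative parts, drop the nonnegative terms $g_+*h_+$ and $g_-*h_-$, and bound each cross term by the sup of the bounded negative part times the $L^1$ mass of the opposite positive part, which equals $\frac12\|g\|_1$ or $\frac12\|h\|_1$ because the means vanish.
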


\begin{proof}
Write $g=g_+-g_-$ and $h=h_+-h_-$ with $0\le g_-\le G$, $0\le h_-\le H$, $\int g_+=\|g\|_1/2$ and $\int h_+=\|h\|_1/2$. Then $g*h\ge-(g_-*h_+)-(g_+*h_-)\ge-G\int h_+-H\int g_+$.
\end{proof}

For the second lemma write $T_1=F_+$ and $T_2=-F_-$, so that $F=T_1+T_2$, $0\le T_1\le u$, $-K\le T_2\le0$, $\|T_1\|_1=\|T_2\|_1=\mu/2$ and $\|T_2\|_2^2\le K\mu/2$, and put $e_s(y)=e(sy)$ for $s\in\Z$. Besides Parseval and Young's inequality $\|g*h\|_2\le\|g\|_1\|h\|_2$ we use the following splitting device: if $g$ is a function on $\mathbb T$ with $|g|\le u*u=u+K^2-K$, then $g=g'+g''$ with $g'=g\min(1,u/|g|)$ (so $g'=g$ where $|g|\le u$), and
\begin{equation}\label{eq:split}
|g'|\le u,\qquad |g''|=(|g|-u)_+\le K^2,\qquad |g''|\le|g|,
\end{equation}
hence $\|g''\|_2\le(\|g''\|_\infty\|g''\|_1)^{1/2}\le K\|g\|_1^{1/2}$.

\begin{lemma}[Decomposition]\label{lem:decomp}
For every $t\ne0$ there are functions $Q_1,Q_2$ on $\mathbb T$ such that
\[
\widehat{\mathbf 1_{B_t(A')}}=Q_1+Q_2,\qquad |Q_1|\le3u,\qquad Q_1(-y)=\overline{Q_1(y)},
\]
\[
\|Q_2\|_2\le6K\mu+\sqrt2\,K^{3/2}\mu^{1/2}\le7\sqrt2\,K^{3/2}\mu^{1/2}.
\]
\end{lemma}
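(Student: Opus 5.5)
The plan is to write $\widehat{\mathbf 1_{B_t(A')}}$ as an explicit combination of convolutions of $F$ with its modulations, and then split each piece by the device \eqref{eq:split}. The starting point is the identity on coefficients
\[
\mathbf 1_{B_t(A')}(m)=a_ma_{m-t}-a_ma_{m-t}a_{m+t}.
\]
Since $\widehat{\mathbf 1_{A'+t}}=e_tF$ and $\widehat{\mathbf 1_X}*\widehat{\mathbf 1_Y}=\widehat{\mathbf 1_{X\cap Y}}$, this gives
\[
\widehat{\mathbf 1_{B_t(A')}}=f_t-f_t*(e_{-t}F),\qquad f_t:=F*(e_tF)=\widehat{\mathbf 1_{A'\cap(A'+t)}} .
\]
Both terms have integer (real) coefficient sequences. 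So every function we build has the symmetry $g(-y)=\overline{g(y)}$, and since $u$ is even the truncation $g'=g\min(1,u/|g|)$ keeps this symmetry. That will give $Q_1(-y)=\overline{Q_1(y)}$ automatically.

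\emph{Step 1: the term $f_t$.} I would expand $F=T_1+T_2$, so that
\[
f_t=T_1*(e_tT_1)+T_1*(e_tT_2)+T_2*(e_tT_1)+T_2*(e_tT_2).
\]
Since $0\le T_1\le u$, we have $|T_1*(e_tT_1)|\le T_1*T_1\le u*u$. The splitting device then yields a piece bounded by $u$ plus a piece of $L^2$ norm at most $K\|T_1*T_1\|_1^{1/2}\le K\mu/2$. The three terms containing $T_2$ go into $Q_2$ using Young's inequality and $\|T_2\|_2\le(K\mu/2)^{1/2}$, with $\|T_1\|_1,\|T_2\|_1=\mu/2$. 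This costs $O(\mu\cdot K^{1/2}\mu^{1/2})$, which is $O(K\mu)$ after using $\mu\le2K$. In the same step I record the $L^1$ bound $\|f_t\|_1\le4\mu$. To get it, note that $f_t=F*(e_tF)$ has mean zero. It is bounded below in the sense of Lemma~\ref{lem:conv}, applied after writing it via $u$: the lemma gives $F*(e_tF)\ge -K\mu$-type bounds on the negative part, and mean zero then gives $\|f_t\|_1=2\int(f_t)_-$. Getting the sharp constant $4\mu$ (rather than Bedert's $8K$) is the point where the $L^1$ bookkeeping enters.

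\emph{Step 2: the triple term $f_t*(e_{-t}F)$.} Write $f_t=f_t'+f_t''$, where $f_t'$ is the truncated main part from Step 1 with $|f_t'|\le u$, and $f_t''$ collects everything already placed in $Q_2$. Then:
\begin{itemize}
\item $f_t'*(e_{-t}T_1)$ is dominated pointwise by $u*u$, so the splitting device applies again. It gives a piece $\le u$ plus $L^2$ error at most $K\|f_t'\|_1^{1/2}\|T_1\|_1^{1/2}$, or more crudely $K\cdot O(\mu)^{1/2}\cdot O(K)^{1/2}$. This is the source of the term $\sqrt2\,K^{3/2}\mu^{1/2}$.
\item $f_t'*(e_{-t}T_2)$ has $L^2$ norm at most $\|f_t'\|_1\|T_2\|_2$ by Young's inequality.
\item $f_t''*(e_{-t}F)$ has $L^2$ norm at most $\|f_t''\|_2\|F\|_1$.
\end{itemize}
Collecting terms, $Q_1$ is the sum of three truncated pieces (from $T_1*(e_tT_1)$ and from $f_t'*(e_{-t}T_1)$, the latter possibly split in two), which explains $|Q_1|\le3u$. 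The remaining terms make up $Q_2$ and have total $L^2$ norm $\le6K\mu+\sqrt2K^{3/2}\mu^{1/2}$. The final inequality of the lemma is then $\mu\le2K$ from \eqref{eq:L1trivial}.

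The main obstacle is Step 2. The pointwise domination $|f_t'*(e_{-t}T_1)|\le u*u$ needs $|f_t'|\le u$ \emph{and} $0\le T_1\le u$. The Young-type estimates for the cross terms must be arranged so that each carries only one factor of $K^{1/2}$ beyond $\mu$. This means always placing $T_2$ in $L^2$ (using $\|T_2\|_2^2\le K\mu/2$) and $f_t$ or $T_1$ in $L^1$ (using $\|f_t\|_1\le4\mu$), and never the other way round. If this bookkeeping does not close with the stated constants, I would first try absorbing the $f_t''$ contribution more carefully, bounding $\|f_t''\|_2$ directly by $K\mu$ via Step 1, before enlarging the constants.
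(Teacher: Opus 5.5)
\textbf{Gap: the remainder term.} Your reduction $\widehat{\mathbf 1_{B_t(A')}}=f_t-f_t*(e_{-t}F)$ with $f_t=F*(e_tF)$ is correct. It is a more direct route than the paper's, which splits $\widehat{\mathbf 1_B}$ into $\frac12(\widehat{\mathbf 1_B}+\widehat{\mathbf 1_{-B}})$ and $\frac12(\widehat{\mathbf 1_B}-\widehat{\mathbf 1_{-B}})$ and then decomposes $\widehat{\mathbf 1_{A'_{\pm t}}}$, $\widehat{\mathbf 1_{A'_t\cap A'_{-t}}}$ and $F*((e_t-e_{-t})F)$ separately. However, your estimate for the remainder term does not close. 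Young's inequality $\|f_t''*(e_{-t}F)\|_2\le\|f_t''\|_2\|F\|_1$ loses a full factor $\mu$. With $\|f_t''\|_2\le2K\mu$ from Step~1, it only gives $2K\mu^2$. For $\mu\asymp K$ this is of order $K^3$, far above both $6K\mu+\sqrt2K^{3/2}\mu^{1/2}$ and $7\sqrt2K^{3/2}\mu^{1/2}$. It would also spoil the bound $E^2\ll K^3\mu$ used in Proposition~\ref{prop:boundary}. Your fallback, sharpening $\|f_t''\|_2$, does not recover the lost factor. The missing observation is that convolution with $e_{-t}F$ is a $0$--$1$ Fourier multiplier: the $m$th coefficient of $g*(e_{-t}F)$ is $\widehat g(m)a_{m+t}$. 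By Parseval, $\|f_t''*(e_{-t}F)\|_2\le\|f_t''\|_2\le2K\mu$. This is exactly how the paper treats the corresponding term $R_2'*(e_{-t}F)$.

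\textbf{The $L^1$ bound, and what your route gives once repaired.} The bound $\|f_t\|_1\le4\mu$ that you plan to record in Step~1 is unsupported for your $f_t$. The coefficients $a_ma_{m-t}$ of $F*(e_tF)$ are not even in $m$, so this function is complex-valued. Hence Lemma~\ref{lem:conv} does not apply, and the identity $\|f_t\|_1=2\int(f_t)_-$ has no meaning. The $4\mu$ in the paper concerns Bedert's twisted function $2(1+\sin2\pi ty)F(y)$, which is used only afterwards, in the proof of Proposition~\ref{prop:boundary}. You do not need it here. For $f_t'*(e_{-t}T_1)$ and $f_t'*(e_{-t}T_2)$ the right input is $\|f_t'\|_1\le\int u=K$, which follows from $|f_t'|\le u$. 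Combined with $\|T_1\|_1=\mu/2$ and $\|T_2\|_2^2\le K\mu/2$, this gives at most $K^{3/2}\mu^{1/2}/\sqrt2$ for each term. With these two repairs your route does prove the lemma, with better constants. Let $g'$ be the truncation of $f_t'*(e_{-t}T_1)$. Then $Q_1=f_t'-g'$ involves two truncated pieces, not three, so $|Q_1|\le2u$. The symmetry $Q_1(-y)=\overline{Q_1(y)}$ is automatic, as you note, because every piece has real Fourier coefficients and $u$ is even. Finally, $\|Q_2\|_2\le4K\mu+\sqrt2K^{3/2}\mu^{1/2}$.
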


\begin{proof}
Write $B=B_t(A')$ and $A'_t=A'\cap(A'+t)$, so that $A'_{-t}=-A'_t$, $B=A'_t\setminus A'_{-t}$ and $-B=A'_{-t}\setminus A'_t$. Since $(e_sF)^\wedge(m)=a_{m-s}$, comparing Fourier coefficients gives
\begin{equation}\label{eq:evenodd}
\widehat{\mathbf 1_B}=\tfrac12\Bigl(\widehat{\mathbf 1_{A'_t}}+\widehat{\mathbf 1_{A'_{-t}}}-2\widehat{\mathbf 1_{A'_t\cap A'_{-t}}}\Bigr)+\tfrac12F*\bigl((e_t-e_{-t})F\bigr),
\end{equation}
the first bracket being $\widehat{\mathbf 1_B}+\widehat{\mathbf 1_{-B}}$ and the last term $\widehat{\mathbf 1_B}-\widehat{\mathbf 1_{-B}}$ (its $m$th coefficient is $a_m(a_{m-t}-a_{m+t})$), and also $\widehat{\mathbf 1_{A'_t}}=F*(e_tF)$ and $\widehat{\mathbf 1_{A'_t\cap A'_{-t}}}=\widehat{\mathbf 1_{A'_t}}*(e_{-t}F)$. We decompose each term.

\emph{The sets $A'_{\pm t}$.} We have $F*(e_tF)=T_1*(e_tT_1)+T_1*(e_tT_2)+T_2*(e_tT_1)+T_2*(e_tT_2)$. The first term has modulus at most $T_1*T_1\le u*u$, so \eqref{eq:split} writes it as $R_1'+V$ with $|R_1'|\le u$ and $\|V\|_2\le K\|T_1*(e_tT_1)\|_1^{1/2}\le K\|T_1\|_1=K\mu/2$. Each of the other three terms has $L^2$ norm at most $\|T_1\|_1\|T_2\|_2$ or $\|T_2\|_1\|T_2\|_2$ by Young's inequality, that is, at most $\frac\mu2(\frac{K\mu}2)^{1/2}=\frac1{2\sqrt2}K^{1/2}\mu^{3/2}\le\frac12K\mu$, where we used $\mu\le2K$. Hence $\widehat{\mathbf 1_{A'_t}}=R_1'+R_2'$ with
\[
|R_1'|\le u,\qquad\|R_1'\|_1\le K,\qquad\|R_2'\|_2\le2K\mu,
\]
and in the same way $\widehat{\mathbf 1_{A'_{-t}}}=R_1''+R_2''$ with the same bounds.

\emph{The set $A'_t\cap A'_{-t}$.} We have $\widehat{\mathbf 1_{A'_t\cap A'_{-t}}}=R_1'*(e_{-t}T_1)+R_1'*(e_{-t}T_2)+R_2'*(e_{-t}F)$. The first term has modulus at most $u*u$ and \eqref{eq:split} writes it as $R_1'''+v$ with $|R_1'''|\le u$ and $\|v\|_2\le K\bigl(\|R_1'\|_1\|T_1\|_1\bigr)^{1/2}\le K^{3/2}(\mu/2)^{1/2}$; the second has $L^2$ norm at most $\|R_1'\|_1\|T_2\|_2\le K^{3/2}(\mu/2)^{1/2}$; the third has Fourier coefficients $\widehat{R_2'}(m)a_{m+t}$, so its $L^2$ norm is at most $\|R_2'\|_2\le2K\mu$. Thus $\widehat{\mathbf 1_{A'_t\cap A'_{-t}}}=R_1'''+R_2'''$ with
\[
|R_1'''|\le u,\qquad\|R_2'''\|_2\le\sqrt2\,K^{3/2}\mu^{1/2}+2K\mu .
\]

\emph{The odd part.} Write $(e_t-e_{-t})F=S_1'+S_2'$ with $S_1'=(e_t-e_{-t})T_1$ and $S_2'=(e_t-e_{-t})T_2$; then $|S_1'|\le2T_1$, $\|S_1'\|_1\le\mu$, $\|S_2'\|_1\le\mu$ and $\|S_2'\|_2\le2\|T_2\|_2\le(2K\mu)^{1/2}$. Now $F*(S_1'+S_2')=T_1*S_1'+T_1*S_2'+T_2*S_1'+T_2*S_2'$. The function $\frac12T_1*S_1'$ has modulus at most $T_1*T_1\le u*u$, so by \eqref{eq:split} the first term equals $S_1+V_2$ with $|S_1|\le2u$ and $\|V_2\|_2\le2K\bigl(\tfrac12\|T_1\|_1\|S_1'\|_1\bigr)^{1/2}\le K\mu$. The other three terms have $L^2$ norms at most $\|T_1\|_1\|S_2'\|_2\le\frac\mu2(2K\mu)^{1/2}$, $\|S_1'\|_1\|T_2\|_2\le\mu(K\mu/2)^{1/2}$ and $\|S_2'\|_1\|T_2\|_2\le\mu(K\mu/2)^{1/2}$, each equal to $K^{1/2}\mu^{3/2}/\sqrt2\le K\mu$. Hence $F*\bigl((e_t-e_{-t})F\bigr)=S_1+S_2$ with
\[
|S_1|\le2u,\qquad\|S_2\|_2\le4K\mu .
\]

\emph{Conclusion.} Put $Q_1^0=\frac12(R_1'+R_1''-2R_1''')+\frac12S_1$ and $Q_2^0=\frac12(R_2'+R_2''-2R_2''')+\frac12S_2$. By \eqref{eq:evenodd}, $\widehat{\mathbf 1_B}=Q_1^0+Q_2^0$, $|Q_1^0|\le\frac12(u+u+2u)+u=3u$, and
\[
\|Q_2^0\|_2\le\tfrac12\bigl(2K\mu+2K\mu+2\sqrt2\,K^{3/2}\mu^{1/2}+4K\mu\bigr)+2K\mu=6K\mu+\sqrt2\,K^{3/2}\mu^{1/2}.
\]
Finally, $\widehat{\mathbf 1_B}(-y)=\overline{\widehat{\mathbf 1_B}(y)}$ and $u$ is even, so $Q_j(y):=\frac12\bigl(Q_j^0(y)+\overline{Q_j^0(-y)}\bigr)$ ($j=1,2$) satisfy $Q_1+Q_2=\widehat{\mathbf 1_B}$, $|Q_1|\le3u$, $Q_1(-y)=\overline{Q_1(y)}$ and $\|Q_2\|_2\le\|Q_2^0\|_2$. The last inequality of the lemma is $6K\mu\le6\sqrt2K^{3/2}\mu^{1/2}$, that is, $\mu\le2K$.
\end{proof}

The third lemma concerns Bedert's twisted function
\[
f_t(y)=2\bigl(1+\sin2\pi ty\bigr)F(y).
\]
Since $(1+\sin2\pi ty)(F+K)\ge0$ and $0\le1+\sin2\pi ty\le2$, we have $f_t\ge-4K$ and $\|f_t\|_1\le4\mu$; and since $2\sin2\pi ty=-i(e_t-e_{-t})$,
\begin{equation}\label{eq:ftcoeff}
\widehat{f_t}(m)=2a_m-ia_{m-t}+ia_{m+t}\in\{0,\ 2,\ 2\pm i,\ \pm i\},
\end{equation}
which vanishes at $m=0$ by symmetry, so $\int f_t=0$, and equals $2-i$ for $m\in B_t(A')$. By Lemma~\ref{lem:conv}, $f_t*f_t\ge-\frac12(4K\cdot4\mu+4K\cdot4\mu)=-16K\mu$. Put
\[
r_t(y)=\bigl(1-\cos(2\pi ty+\pi/4)\bigr)(f_t*f_t)(y)\ \ge\ -32K\mu,
\]
and let $\psi_t(y)=\frac12(r_t(y)+r_t(-y))$ and $\varphi_t(y)=\frac12(r_t(y)-r_t(-y))$ be its even and odd parts. Since $\psi_t\pm\varphi_t\ge-32K\mu$,
\begin{equation}\label{eq:amp}
|\varphi_t(y)|\le\psi_t(y)+32K\mu\qquad(y\in\mathbb T).
\end{equation}
Write $\rho_m=\widehat{r_t}(m)$. As $r_t$ is real, $\rho_{-m}=\overline{\rho_m}$, so $\widehat{\psi_t}(m)=\operatorname{Re}\rho_m$ and $\widehat{\varphi_t}(m)=i\operatorname{Im}\rho_m$. Since $\cos(2\pi ty+\pi/4)=\frac12(e^{i\pi/4}e_t+e^{-i\pi/4}e_{-t})$ and $(f_t*f_t)^\wedge=\widehat{f_t}^{2}$,
\begin{equation}\label{eq:rho}
\rho_m=\widehat{f_t}(m)^2-\frac{1+i}{2\sqrt2}\widehat{f_t}(m-t)^2-\frac{1-i}{2\sqrt2}\widehat{f_t}(m+t)^2 .
\end{equation}

\begin{lemma}[Coefficient bounds]\label{lem:coeff}
For all $m\in\Z$, $\operatorname{Re}\rho_m\le4+\frac1{\sqrt2}$ and $|\operatorname{Im}\rho_m|\le9$. For $m\in B_t(A')$, $-\operatorname{Im}\rho_m\ge4+\sqrt2$.
\end{lemma}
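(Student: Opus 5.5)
The plan is a finite case check built on \eqref{eq:rho} and \eqref{eq:ftcoeff}. By \eqref{eq:ftcoeff} every $\widehat{f_t}(m)$ lies in $\{0,2,2\pm i,\pm i\}$. Its square therefore lies in
\[
\Sigma=\{0,\ 4,\ 3+4i,\ 3-4i,\ -1\}.
\]
Put $c=\frac{1+i}{2\sqrt2}=\frac12e^{i\pi/4}$, so that $\rho_m=z_0-cz_--\bar cz_+$ with $z_0=\widehat{f_t}(m)^2$, $z_-=\widehat{f_t}(m-t)^2$ and $z_+=\widehat{f_t}(m+t)^2$, all in $\Sigma$. I will tabulate $cz$ and $\bar cz$ for the five values $z\in\Sigma$:
\[
cz\in\Bigl\{0,\ \sqrt2(1+i),\ \tfrac{-1+7i}{2\sqrt2},\ \tfrac{7-i}{2\sqrt2},\ \tfrac{-1-i}{2\sqrt2}\Bigr\},\qquad
\bar cz\in\Bigl\{0,\ \sqrt2(1-i),\ \tfrac{7+i}{2\sqrt2},\ \tfrac{-1-7i}{2\sqrt2},\ \tfrac{-1+i}{2\sqrt2}\Bigr\}.
\]

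\emph{Bound on the real part.} From the table, $-\operatorname{Re}(cz)\le\frac1{2\sqrt2}$ and $-\operatorname{Re}(\bar cz)\le\frac1{2\sqrt2}$ for every $z\in\Sigma$. Also $\operatorname{Re}z_0\le4$. Hence $\operatorname{Re}\rho_m\le4+\frac1{\sqrt2}$, and this needs no joint constraint on $z_0,z_\pm$.

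\emph{Bound on the imaginary part.} Every $z\in\Sigma$ has $|z|\le5$, so $|cz|,|\bar cz|\le\frac52$. Also $|\operatorname{Im}z_0|\le4$. The triangle inequality gives $|\operatorname{Im}\rho_m|\le4+\frac52+\frac52=9$.

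\emph{The boundary points.} This is the only step using the structure of $A'$, and I expect it to need the most care. Fix $m\in B_t(A')$, so $a_m=1$, $a_{m-t}=1$ and $a_{m+t}=0$. Then $\widehat{f_t}(m)=2-i$, so $z_0=3-4i$ and $\operatorname{Im}z_0=-4$.

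By \eqref{eq:ftcoeff} at $m+t$,
\[
\widehat{f_t}(m+t)=2a_{m+t}-ia_m+ia_{m+2t}=-i+ia_{m+2t}\in\{-i,0\},
\]
so $z_+\in\{-1,0\}$. The table gives $\operatorname{Im}(\bar cz_+)\in\{\frac1{2\sqrt2},0\}$, which is at least $0$.

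By \eqref{eq:ftcoeff} at $m-t$,
\[
\widehat{f_t}(m-t)=2a_{m-t}-ia_{m-2t}+ia_m=2+i-ia_{m-2t}\in\{2+i,2\},
\]
so $z_-\in\{3+4i,4\}$. The table gives $\operatorname{Im}(cz_-)\in\{\frac7{2\sqrt2},\sqrt2\}$, which is at least $\sqrt2$.

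Therefore
\[
-\operatorname{Im}\rho_m=4+\operatorname{Im}(cz_-)+\operatorname{Im}(\bar cz_+)\ge4+\sqrt2 .
\]
The key point is that membership in $B_t(A')$ pins $\widehat{f_t}(m\pm t)$ down to two values each, rather than five. I will double-check the sign conventions of \eqref{eq:ftcoeff} at shifted indices, since this last inequality depends on them.
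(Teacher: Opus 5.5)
Your proof is correct and is essentially the paper's argument: a finite case check over the squares $\{0,4,3\pm4i,-1\}$, where membership in $B_t(A')$ pins $\widehat{f_t}(m-t)$ and $\widehat{f_t}(m+t)$ to two values each and yields the same four values of $-\operatorname{Im}\rho_m$, the smallest being $4+\sqrt2$. The only cosmetic difference is in the imaginary-part bound: you use $|cz|\le\frac52$ to get exactly $9$, whereas the paper bounds $\bigl|\operatorname{Im}\frac{(1\pm i)z}{2\sqrt2}\bigr|\le\frac7{2\sqrt2}$ directly to get $4+\frac7{\sqrt2}<9$.
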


\begin{proof}
By \eqref{eq:ftcoeff} each square in \eqref{eq:rho} lies in $Z=\{0,4,3+4i,3-4i,-1\}$. For $z\in Z$ one has $\operatorname{Re}z\le4$, $-\operatorname{Re}\frac{(1+i)z}{2\sqrt2}=\frac{\operatorname{Im}z-\operatorname{Re}z}{2\sqrt2}\le\frac1{2\sqrt2}$ and $-\operatorname{Re}\frac{(1-i)z}{2\sqrt2}=-\frac{\operatorname{Re}z+\operatorname{Im}z}{2\sqrt2}\le\frac1{2\sqrt2}$, which gives the first bound; and $|\operatorname{Im}z|\le4$, $\bigl|\operatorname{Im}\frac{(1\pm i)z}{2\sqrt2}\bigr|\le\frac7{2\sqrt2}$, which gives $|\operatorname{Im}\rho_m|\le4+\frac7{\sqrt2}<9$. If $m\in B_t(A')$ then $a_m=a_{m-t}=1$ and $a_{m+t}=0$, so $\widehat{f_t}(m)^2=(2-i)^2=3-4i$, $\widehat{f_t}(m-t)=2+i(1-a_{m-2t})\in\{2,2+i\}$ and $\widehat{f_t}(m+t)=-i(1-a_{m+2t})\in\{-i,0\}$. Substituting the four possibilities into \eqref{eq:rho} gives
\[
-\operatorname{Im}\rho_m\in\Bigl\{4+\sqrt2,\ 4+\tfrac5{2\sqrt2},\ 4+\tfrac7{2\sqrt2},\ 4+2\sqrt2\Bigr\},
\]
each of which is at least $4+\sqrt2$.
\end{proof}

\begin{proof}[Proof of Proposition~\ref{prop:boundary}]
Fix $t\ne0$, write $B=B_t(A')$, take $Q_1,Q_2$ from Lemma~\ref{lem:decomp}, and put $E=\|Q_2\|_2$ and $H=|Q_1|$. Then $H\ge0$ is even, so $\widehat H(m)$ is real and $\widehat{H*H}(m)=\widehat H(m)^2\ge0$; moreover $\|H\|_1\le3\int u=3K$ and $\|H\|_2\le\|\widehat{\mathbf 1_B}\|_2+E=|B|^{1/2}+E$. We integrate \eqref{eq:amp} against $H*H\ge0$. On the one hand, by Parseval and Lemma~\ref{lem:coeff},
\begin{align*}
X:={}&\int(\psi_t+32K\mu)(H*H)=\sum_m\operatorname{Re}\rho_m\widehat H(m)^2+32K\mu\|H\|_1^2\\
\le{}&\Bigl(4+\frac1{\sqrt2}\Bigr)\bigl(|B|^{1/2}+E\bigr)^2+288K^3\mu .
\end{align*}
On the other hand $H*H\ge|Q_1*Q_1|$ pointwise, so by \eqref{eq:amp}
\[
X\ge\int|\varphi_t|\,|Q_1*Q_1|\ge\Bigl|\int\varphi_t(Q_1*Q_1)\Bigr|=\Bigl|\sum_mi\operatorname{Im}\rho_m\widehat{Q_1*Q_1}(-m)\Bigr| .
\]
Since $Q_1=\widehat{\mathbf 1_B}-Q_2$ and $\widehat{\mathbf 1_B}*\widehat{\mathbf 1_B}=\widehat{\mathbf 1_B}$, the coefficients of $Q_1*Q_1$ are $\mathbf 1_B(m)-2\cdot\mathbf 1_B(m)\widehat{Q_2}(m)+\widehat{Q_2}(m)^2$. The contribution of the first of these terms is $i\sum_{m\in-B}\operatorname{Im}\rho_m=i\sum_{m\in B}(-\operatorname{Im}\rho_m)$, of modulus at least $(4+\sqrt2)|B|$ by Lemma~\ref{lem:coeff}; by the same lemma, Cauchy--Schwarz and Parseval, the other two contribute at most $9\bigl(2|B|^{1/2}E+E^2\bigr)$ in modulus. Comparing the two bounds for $X$ and using $(4+\sqrt2)-(4+\frac1{\sqrt2})=\frac1{\sqrt2}$,
\[
\frac{|B|}{\sqrt2}\le\Bigl(13+\frac1{\sqrt2}\Bigr)\bigl(2|B|^{1/2}E+E^2\bigr)+288K^3\mu .
\]
With $\alpha=13\sqrt2+1<19.4$ and $\beta=288\sqrt2\,K^3\mu<408K^3\mu$ this reads $|B|\le2\alpha E|B|^{1/2}+\alpha E^2+\beta$, so $|B|^{1/2}\le\alpha E+(\alpha^2E^2+\alpha E^2+\beta)^{1/2}$ and $|B|\le(4\alpha^2+2\alpha)E^2+2\beta$. Since $E^2\le98K^3\mu$ by Lemma~\ref{lem:decomp}, we get $|B|\le\bigl(98(4\alpha^2+2\alpha)+816\bigr)K^3\mu<1.6\cdot10^5K^3\mu$. The second inequality of the proposition is $\mu\le2K$.
\end{proof}

\begin{remark}\label{rem:threeK}
The proof uses only three pieces of information about $F$: the hypothesis $F\ge-K$, the identity $\int u=K$, and $\mu=\|F\|_1$ (through $\|T_1\|_1=\|T_2\|_1=\mu/2$ and $\|T_2\|_2^2\le K\mu/2$). The three factors of $K$ in the final bound enter as follows: one through $f_t\ge-4K$, which produces the constant $32K\mu$ in \eqref{eq:amp}, and two through $\|H\|_1\le3\int u$; the terms in $E^2$ are of the same size. Consequently, within this argument, an improvement to $|B_t(A')|\ll K^{4-\delta}$ can only come from an upper bound $\mu\ll K^{1-\delta}$. Such a bound fails for the sets of Theorem~\ref{thm:base}: for them $\mu/K=\|f_A\|_1/\lambda_A$ lies between $0.59$ and $0.76$ at the sizes of Table~\ref{tab:num} and should tend to $0.97$ along the construction (Section~\ref{sec:numerics}), and we do not know whether such a bound holds for any set with $\lambda_A\le n^{1/2-\varepsilon}$.
\end{remark}

\subsection{Sets with frequencies $O(n)$}\label{subsec:dense}

\begin{lemma}[Total boundary]\label{lem:boundary}
Let $T_3(A'):=\#\{(a,s)\in A'\times\Z_{>0}: a-s\in A',\ a+s\in A'\}$. Then $T_3(A')\le N^2/4$ and
\[
\sum_{t\ne0}|B_t(A')|=N^2-N-2T_3(A')\ \ge\ \frac{N^2}2-N .
\]
\end{lemma}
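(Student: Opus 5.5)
We count, for each ordered pair $(a,b)\in A'\times A'$ with $a\ne b$, its contribution to the boundary sets. For $t=a-b\ne0$, $a$ lies in $B_t(A')$ exactly when $a-t=b\in A'$ and $a+t=2a-b\notin A'$. Thus
\[
\sum_{t\ne0}|B_t(A')|=\#\{(a,b)\in A'^2:\ a\ne b,\ 2a-b\notin A'\}.
\]
There are $N^2-N$ ordered pairs with $a\ne b$.

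From this we subtract the pairs with $2a-b\in A'$. Each such pair is a three-term progression $b,a,2a-b$ with middle term $a$ and nonzero step $s=a-b$. Reparametrising by $(a,|s|)$, the pairs with $s>0$ and those with $s<0$ each correspond bijectively to elements of the set counted by $T_3(A')$, namely $a\in A'$ and $s>0$ with $a\pm s\in A'$. The subtracted count is therefore $2T_3(A')$, which gives the identity.

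For the inequality we need $T_3(A')\le N^2/4$. Fix $a$. The map $s\mapsto a+s$ is injective into $A'\cap(a,\infty)$, and $s\mapsto a-s$ is injective into $A'\cap(-\infty,a)$. Hence the number of valid $s$ is at most $\min(\ell_a,r_a)$, where $\ell_a$ and $r_a$ count the elements of $A'$ below and above $a$. If $a$ is the $i$th smallest element of $A'$, then $\ell_a=i-1$ and $r_a=N-i$. So
\[
T_3(A')\le\sum_{i=1}^N\min(i-1,N-i)\le\sum_i\frac{N-1}{2}=\frac{N(N-1)}{2}.
\]
This is too weak: we want $N^2/4$. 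Summing $\min(i-1,N-i)$ exactly gives roughly $N^2/4$; precisely it equals $\lfloor (N-1)^2/4\rfloor\le N^2/4$, which can be checked by splitting according to the parity of $N$. Then
\[
N^2-N-2T_3(A')\ge N^2-N-\frac{N^2}{2}=\frac{N^2}{2}-N.
\]

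The only step needing care is this last elementary sum, i.e.\ getting the sharp bound rather than the trivial one. Symmetry of $A'$ is not needed anywhere.
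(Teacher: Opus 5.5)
Your proof is correct and follows essentially the same route as the paper: your count of ordered pairs $(a,b)$ with $a\ne b$ and $2a-b\notin A'$ is the paper's identity $|B_t(A')|=r(t)-r_3(t)$ summed over $t\ne0$, and your bound $T_3(A')\le\sum_i\min(i-1,N-i)$, obtained by ordering $A'$, is exactly the paper's argument. Your exact evaluation of that sum as $\lfloor (N-1)^2/4\rfloor$ is right: it equals $m^2$ for $N=2m+1$ and $m^2-m$ for $N=2m$. You are also right that the symmetry of $A'$ plays no role in this lemma.
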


\begin{proof}
Write $r(t)=\#\{a\in A':a-t\in A'\}$ and $r_3(t)=\#\{a\in A':a-t\in A',\ a+t\in A'\}$, so that $|B_t(A')|=r(t)-r_3(t)$. Summing over $t\ne0$, $\sum_{t\ne0}r(t)=\#\{(a,a')\in A'^2:a\ne a'\}=N^2-N$ and $\sum_{t\ne0}r_3(t)=\#\{(a,t):t\ne0,\ a\pm t\in A'\}=2T_3(A')$, which gives the identity. For the bound on $T_3$, order $A'=\{a_1<\dots<a_N\}$. If $(a_i,s)$ is counted by $T_3(A')$ then $a_i+s=a_j$ for some $j>i$ and $a_i-s=2a_i-a_j=a_l$ for some $l<i$; since $s$ determines $j$ and $l$, the number of such $s$ is at most $\min(N-i,i-1)$, and $\sum_{i=1}^N\min(N-i,i-1)\le N^2/4$.
\end{proof}

\begin{proposition}[Frequencies $O(n)$]\label{prop:dense}
Let $C\ge1$ and let $A\subset[1,Cn]$ be a set of $n\ge2$ positive integers. Then
\[
\lambda_A^3\|f_A\|_1\ \ge\ \frac{n-1}{32CC_1}\qquad\text{and hence}\qquad\lambda_A\ \ge\ \Bigl(\frac{n-1}{64CC_1}\Bigr)^{1/4},
\]
with $C_1$ the constant of Proposition~\ref{prop:boundary}.
\end{proposition}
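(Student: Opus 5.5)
Apply Proposition~\ref{prop:boundary} to $A'=A\cup(-A)$, for which $N=2n$, $K=2\lambda_A$ and $\mu=2\|f_A\|_1$. For each $t\ne0$ this gives $|B_t(A')|\le C_1K^3\mu=16C_1\lambda_A^3\|f_A\|_1$. Lemma~\ref{lem:boundary} gives $\sum_{t\ne0}|B_t(A')|\ge N^2/2-N=2n^2-2n$. We combine the two by restricting to shifts $t$ that can actually occur.

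Since $A'\subset[-Cn,Cn]$, a nonempty $B_t(A')$ requires $a,a-t\in A'$, hence $0<|t|\le2Cn$. So at most $4Cn$ values of $t$ contribute. This gives
\[
2n(n-1)\le\sum_{0<|t|\le 2Cn}|B_t(A')|\le 4Cn\cdot16C_1\lambda_A^3\|f_A\|_1 ,
\]
that is, $\lambda_A^3\|f_A\|_1\ge (n-1)/(32CC_1)$, which is exactly the first claim.

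This may be why the introduction mentions an "elementary selection of the auxiliary shift": pigeonholing over $t$ produces one $t$ with $|B_t|\ge(n-1)/(2C)$, which is the same argument. I would double-check that the count of admissible $t$ is $4Cn$ rather than something smaller; either way the stated constant follows.

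For the second claim, use $\|f_A\|_1\le2\lambda_A$ from \eqref{eq:L1trivial}. Then $2\lambda_A^4\ge(n-1)/(32CC_1)$, so $\lambda_A\ge\bigl((n-1)/(64CC_1)\bigr)^{1/4}$.

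The main point of care is the bookkeeping of the factors of $2$ in the translation $K=2\lambda_A$, $\mu=2\|f_A\|_1$, and in bounding the number of relevant $t$.
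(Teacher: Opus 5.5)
Your proof is correct and is essentially the paper's argument: apply Proposition~\ref{prop:boundary} to $A'=A\cup(-A)$ to get $|B_t(A')|\le16C_1\lambda_A^3\|f_A\|_1$, observe that only the at most $4Cn$ shifts $t\in(A'-A')\setminus\{0\}\subset[-2Cn,2Cn]$ contribute to the sum in Lemma~\ref{lem:boundary}, and finish with $\|f_A\|_1\le2\lambda_A$. Your bookkeeping of the factors of $2$, and hence the constants $32CC_1$ and $64CC_1$, is correct.
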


\begin{proof}
Put $A'=A\cup(-A)$, so $N=2n$, $K=2\lambda_A$ and $\mu=2\|f_A\|_1$. Proposition~\ref{prop:boundary} gives $|B_t(A')|\le16C_1\lambda_A^3\|f_A\|_1$ for every $t\ne0$. If $B_t(A')\ne\emptyset$ then $t\in(A'-A')\setminus\{0\}\subset[-2Cn,2Cn]\setminus\{0\}$, so at most $4Cn$ values of $t$ contribute to the sum in Lemma~\ref{lem:boundary}. Hence
\[
2n^2-2n=\frac{N^2}2-N\ \le\ \sum_{t\ne0}|B_t(A')|\ \le\ 4Cn\cdot16C_1\lambda_A^3\|f_A\|_1,
\]
which is the first claim; the second follows from $\|f_A\|_1\le2\lambda_A$.
\end{proof}

\begin{remark}\label{rem:dense}
(i) The proof shows more generally that every $n$-element set $A\subset\N$ satisfies $\max A\ge(n^2-n)/(32C_1\lambda_A^3\|f_A\|_1)\ge(n^2-n)/(64C_1\lambda_A^4)$. A set for which the exponent $1/5$ of \cite{Bed25,Sha26} is attained, that is $\lambda_A\le n^{1/5+\varepsilon}$, would therefore need $\max A\ge cn^{6/5-4\varepsilon}$: the sets that could be extremal for Chowla's problem in the sense of having a lower bound worse than $n^{1/4}$ are sparse in their hull, whereas the sets constructed in this paper are dense in theirs. (ii) The selection of $t$ in \cite{Bed25} (Lemma~7.4 there) and in \cite{Sha26} produces a shift with $|B_t(A')|\gg N/K$, which is what limits the general exponent to $1/5$; for sets with frequencies $O(n)$, Lemma~\ref{lem:boundary} produces a shift with $|B_t(A')|\gg N$ instead, and the exponent $1/4$ results. (iii) For the sets $A(S)$ of Theorem~\ref{thm:base} the bound is far from tight, since $-\min f_{A(S)}=\frac12\sqrt n+O(n^{1/4}\sqrt{\log n})$; Proposition~\ref{prop:dense} says nothing about whether $\sqrt n$ is the true order for dense sets.
\end{remark}

\subsection{The $L^1$ norm}\label{subsec:L1}

\begin{lemma}[Third moment of the positive part]\label{lem:third}
Let $S_3(A')$ be the number of triples $(a,b,c)\in A'^3$ with $a+b+c=0$. Then
\[
S_3(A')\ \ge\ \frac{\bigl(N-K\mu/2\bigr)_+^2}{\mu/2}-\frac{K^2\mu}2 ,
\]
and if $N\ge2K\mu$ then $S_3(A')\ge N^2/\mu$.
\end{lemma}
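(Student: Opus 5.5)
The starting point is the identity $S_3(A')=\int F^3$, which holds because $\int F(y)^3\,dy=\sum_{a,b,c\in A'}\int e((a+b+c)y)\,dy$ counts exactly the triples with $a+b+c=0$. Splitting $F=F_+-F_-$ pointwise gives $F^3=F_+^3-F_-^3$, so
\[
S_3(A')=\int F_+^3-\int F_-^3 .
\]
The negative part is handled by \eqref{eq:L1trivial} with $j=3$: $\int F_-^3\le K^2\mu/2$. This accounts for the subtracted term $K^2\mu/2$ in the statement.

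The positive part needs a lower bound, and we use Hölder with weight $F_+$, i.e.\ Cauchy--Schwarz in the form $\bigl(\int F_+^2\bigr)^2\le\int F_+\cdot\int F_+^3$. Since $\int F_+=\mu/2$,
\[
\int F_+^3\ \ge\ \frac{\bigl(\int F_+^2\bigr)^2}{\mu/2}.
\]
For $\int F_+^2$ we use Parseval: $\int F_+^2=\int F^2-\int F_-^2=N-\|F_-\|_2^2\ge N-K\mu/2$, again by \eqref{eq:L1trivial}. If $N-K\mu/2\le0$, the bound $\int F_+^3\ge0$ suffices and matches the positive-part notation $(\cdot)_+$. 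Otherwise we square the inequality. Either way,
\[
\int F_+^3\ge\frac{(N-K\mu/2)_+^2}{\mu/2},
\]
and combining the two pieces gives the first claim.

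For the second claim, assume $N\ge2K\mu$. Then $K\mu/2\le N/4$, so $N-K\mu/2\ge 3N/4$, and the first term is at least $\frac{9N^2/16}{\mu/2}=\frac{9N^2}{8\mu}$. It remains to show that $K^2\mu/2\le\frac{N^2}{8\mu}$, i.e.\ $K^2\mu^2\le N^2/4$. This is exactly $K\mu\le N/2$, which follows from the hypothesis. Hence $S_3(A')\ge\frac{9N^2}{8\mu}-\frac{N^2}{8\mu}=\frac{N^2}{\mu}$.

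The only step needing care is choosing the right Hölder pairing for the positive part. Weighting by $F_+$ uses $L^1$ information rather than $K$, and this is what makes $\mu$ appear in the denominator. Everything else is bookkeeping with \eqref{eq:L1trivial}.
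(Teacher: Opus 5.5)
Your proposal is correct and follows the paper's proof essentially step for step. It uses the same ingredients: the identity $S_3(A')=\int F_+^3-\int F_-^3$, Cauchy--Schwarz with weight $F_+$ together with $\int F_+=\mu/2$ and $\int F_+^2=N-\int F_-^2\ge N-K\mu/2$, the bound $\int F_-^3\le K^2\mu/2$ from \eqref{eq:L1trivial}, and the same estimate $9N^2/(8\mu)-N^2/(8\mu)=N^2/\mu$ in the case $N\ge2K\mu$.
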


\begin{proof}
Expanding $F^3$ shows $S_3(A')=\int F^3=\int F_+^3-\int F_-^3$. By Cauchy--Schwarz,
\[
\int F_+^2=\int F_+^{1/2}F_+^{3/2}\le\Bigl(\int F_+\Bigr)^{1/2}\Bigl(\int F_+^3\Bigr)^{1/2},
\]
so $\int F_+^3\ge(\int F_+^2)^2/\int F_+$, where $\int F_+=\mu/2$ and $\int F_+^2=N-\int F_-^2\ge(N-K\mu/2)_+$ by \eqref{eq:L1trivial} and $\int F_+^2\ge0$; also $\int F_-^3\le K^2\mu/2$ by \eqref{eq:L1trivial}. If $N\ge2K\mu$ then $K\mu/2\le N/4$, so the first term is at least $2(3N/4)^2/\mu=9N^2/(8\mu)$, while $K^2\mu/2=(K\mu)^2/(2\mu)\le N^2/(8\mu)$; the difference is at least $N^2/\mu$.
\end{proof}

The lemma uses only $\int F=0$, $\int F^2=N$ and $F\ge-K$; the integrality of the coefficients enters through the interpretation of $\int F^3$ as a count. Since $\mu\le2K$, it is never weaker than Roth's bound $S_3(A')\ge N^2/K-KN$ (\cite[Lemma~5]{Rot73}, \cite[Lemma~4.1]{Bed25}) by more than a constant factor, and it is stronger by the factor $K/\mu$. We combine it with the total-boundary identity of Shankar \cite[Lemma~3.1]{Sha26}, whose short proof we include.

\begin{lemma}[Shankar]\label{lem:shankar}
For every finite symmetric $A'\subset\Z\setminus\{0\}$,
\[
\sum_{t\in A'}|B_t(A')|\ \ge\ \frac13S_3(A') .
\]
\end{lemma}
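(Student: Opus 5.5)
The plan is to prove the inequality by double counting: attach to each additive triple a boundary element $e\in B_t(A')$ with $t\in A'$, and control how many triples land on each pair $(t,e)$.

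\emph{Step 1: rewrite $S_3(A')$ as a count of pairs.} By symmetry of $A'$, a triple $(a,b,c)\in A'^3$ with $a+b+c=0$ is the same thing as a pair $(x,t)$ with $t\in A'$, $x-t\in A'$ and $x\in A'$. The correspondence is $t=a$, $x=a+b=-c$. Hence
\[
S_3(A')=\sum_{t\in A'}r(t),\qquad r(t)=\#\{x\in A':x-t\in A'\},
\]
with $r(t)$ as in the proof of Lemma~\ref{lem:boundary}. Each pair $(x-t,x)$ lies in a unique maximal progression with common difference $t$ in $A'$. Such a progression of length $L\ge2$ contributes $L-1$ to $r(t)$ but only one element, its right endpoint, to $B_t(A')$. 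So the statement amounts to showing that long progressions cannot dominate: on average, each $t\in A'$ must have maximal progressions of length at most about $4$.

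\emph{Step 2: gain from the three roles of each triple.} A single triple $\{a,b,c\}$ with $a+b+c=0$ produces pairs for each of the three differences $t\in\{a,b,c\}$, up to sign. I would charge each unordered zero-sum triple to a boundary point for a well-chosen one of its three elements, so that the factor $\tfrac13$ appears. Concretely, for $t=a$ I would examine the progression $b,\ a+b,\ 2a+b,\dots$. If $2a+b\notin A'$, then $-c=a+b\in B_a(A')$ is a boundary point and the triple is charged there. If instead $2a+b\in A'$, then $(a,\,a+b,\,-(2a+b))$ is again a zero-sum triple in $A'$ with the same $t=a$. Walking along the progression therefore shows that the triple whose $x$ is the endpoint is charged exactly once. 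To cover all triples, I would use the other two roles ($t=b$, $t=c$, and their negatives via $x\mapsto -x$). The aim is the following claim: for each zero-sum triple at least one of the three candidate points $-c\in B_a$, $-a\in B_b$, $-b\in B_c$ (or a sign-reflected version) is a genuine boundary point. Then the induced map from triples to pairs $(t,e)$ has fibres of size at most $3$.

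\emph{Step 3: the main obstacle.} The hard part is exactly this claim: ruling out a zero-sum triple for which all three continuations $2a+b$, $2b+c$, $2c+a$ (and their reflections) lie in $A'$. My first attempt would be an extremal choice. Order the triple so that $|c|$ is largest, and take the progression whose last step is the one of maximal absolute value in the direction of growth. Since $A'$ is finite, iterating the continuation along the relevant progression must eventually leave $A'$. I would then try to make the charge depend only on data at that last step, namely the endpoint $e$, the difference $t$, and which of the three roles is used. If the fibre bound fails with this choice, the fallback is a weighted charging in which a progression of length $L$ distributes its $L-1$ pairs among endpoints of progressions with the other two differences $b$ and $c$. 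I expect that a convexity count of the type used for $T_3(A')$ in Lemma~\ref{lem:boundary} then bounds the total by $3\sum_{t\in A'}|B_t(A')|$.
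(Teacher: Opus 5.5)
Your Step~1 is right, and it is where the paper's proof also starts: $S_3(A')=\sum_{t\in A'}r(t)$. The gap is in the charging of Steps~2--3. The claim it rests on is false: that every zero-sum triple has a role $(t,x)$, possibly after reflection, with $x\in B_t(A')$. Take $A'=[-M,M]\setminus\{0\}$ with $M\ge5$ and the triple $\{1,2,-3\}$. For an ordering $(a,b,c)$ the candidate is $t=a$, $x=a+b$, and $x\in B_t(A')$ iff $2a+b\notin A'$. Over the six orderings, $2a+b$ runs through $4,5,-1,-5,1,-4$, all of which lie in $A'$, and the reflected triple $\{-1,-2,3\}$ gives the negatives. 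So interior triples have no boundary role at all. Charging them by walking to the end of a progression sends all $L-1$ pairs of a progression of difference $t$ to a single pair $(t,e)$, so the fibres are unbounded. The redistribution across the three roles, which you defer to a ``weighted charging'' and an expected convexity count, is therefore the entire content of the lemma. There is also no slack for an ad hoc extremal choice. For this interval one computes $\sum_{t\in A'}|B_t(A')|=M(M-1)+4\lfloor M/2\rfloor$ against $S_3(A')=3M(M-1)$. The constant $\frac13$ is thus asymptotically attained, and any charging would have to be exactly three-to-one on average.

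The missing idea is to subtract rather than charge. Since $|B_t(A')|=r(t)-r_3(t)$ with $r_3(t)=\#\{x\in A':x\pm t\in A'\}$, your Step~1 reduces the lemma to $\sum_{t\in A'}r_3(t)\le\frac23S_3(A')$. Put $P=A'\cap\N$ and $Q=\#\{(u,v)\in P^2:u+v\in P\}$. A zero-sum triple in $A'$ has no zero entry and exactly two entries of one sign. Counting three positions for the odd-signed entry and two global signs gives $S_3(A')=6Q$. Next, the map $(x,t)\mapsto(|x|,|t|)$ sends the pairs counted by $\sum_{t\in A'}r_3(t)$ four-to-one onto the pairs $(u,v)\in P^2$ with $u\pm v\in A'$. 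This holds because $|x|+|t|$ and $|x|-|t|$ lie in $\{\pm(x-t),\pm(x+t)\}$ and $A'=-A'$. These pairs are among the $Q$ pairs with $u+v\in P$. Hence $\sum_{t\in A'}r_3(t)\le4Q$ and $\sum_{t\in A'}|B_t(A')|\ge6Q-4Q=2Q=\frac13S_3(A')$, which is the paper's argument.
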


\begin{proof}
Let $P=A'\cap\N$, $Q=\#\{(u,v)\in P^2:u+v\in P\}$ and $I=\#\{(u,v)\in P^2:u-v\in A',\ u+v\in A'\}$. A solution of $a+b+c=0$ in $A'^3$ has no zero entry and not all entries of one sign, so it has exactly two positive entries or exactly two negative entries. In the first case the two positive entries form, in order of position, a pair $(u,v)\in P^2$ with $-(u+v)\in A'$, that is $u+v\in P$, and the position of the negative entry can be chosen in three ways; so there are $3Q$ such solutions, and by symmetry $3Q$ of the second kind: $S_3(A')=6Q$. Next, with $r$ and $r_3$ as in the proof of Lemma~\ref{lem:boundary}, $\sum_{t\in A'}r(t)=\#\{(a,t)\in A'^2:a-t\in A'\}=S_3(A')$, via the bijection $(a,t)\mapsto(a,-t,t-a)$, and $\sum_{t\in A'}r_3(t)=\#\{(a,t)\in A'^2:a-t,\ a+t\in A'\}=4I$, because $(a,t)\mapsto(|a|,|t|)$ maps the pairs counted on the left onto the pairs counted by $I$ (as $\{|a|-|t|,|a|+|t|\}\subset\{\pm(a-t),\pm(a+t)\}$ and $A'=-A'$) with fibres of size four. Hence $\sum_{t\in A'}|B_t(A')|=6Q-4I\ge2Q$, since $I\le\#\{(u,v)\in P^2:u+v\in A'\}=Q$.
\end{proof}

\begin{theorem}[An $L^1$-sensitive lower bound]\label{thm:L1}
For every finite $A\subset\N$ with $|A|=n$,
\[
\lambda_A^3\|f_A\|_1^2\ \ge\ \frac{n}{48C_1},
\]
with $C_1$ the constant of Proposition~\ref{prop:boundary}. Consequently $\lambda_A\ge(n/(192C_1))^{1/5}$, and if $\|f_A\|_1\le\lambda_A^{1-\delta}$ for some $\delta\in(0,1]$ then $\lambda_A\ge(n/(48C_1))^{1/(5-2\delta)}$.
\end{theorem}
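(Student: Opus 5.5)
The plan is to combine Shankar's total-boundary inequality (Lemma~\ref{lem:shankar}) with the uniform boundary estimate (Proposition~\ref{prop:boundary}) and the third-moment bound (Lemma~\ref{lem:third}). Put $A'=A\cup(-A)$, so that $N=2n$, $K=2\lambda_A$ and $\mu=2\|f_A\|_1$. The sum in Lemma~\ref{lem:shankar} runs over $t\in A'$, which has $N$ elements. Each term is at most $C_1K^3\mu$ by Proposition~\ref{prop:boundary}, so
\[
\frac13S_3(A')\le\sum_{t\in A'}|B_t(A')|\le NC_1K^3\mu .
\]
Everything then reduces to a lower bound for $S_3(A')$. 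I would split into two cases according to whether the hypothesis $N\ge2K\mu$ of Lemma~\ref{lem:third} holds.

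\emph{Case $N\ge2K\mu$.} Lemma~\ref{lem:third} gives $S_3(A')\ge N^2/\mu$. Hence $N^2/(3\mu)\le NC_1K^3\mu$, that is, $K^3\mu^2\ge N/(3C_1)$. Substituting $K^3\mu^2=32\lambda_A^3\|f_A\|_1^2$ and $N=2n$ gives
\[
\lambda_A^3\|f_A\|_1^2\ge\frac{2n}{96C_1}=\frac n{48C_1},
\]
which is exactly the claimed bound. This matching of constants suggests this is the intended main case.

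\emph{Case $N<2K\mu$.} This case has to be handled directly, and it is where I expect the only real care to be needed, since the bound must not degrade. Here $n<4\lambda_A\|f_A\|_1$. I also need lower bounds for $\lambda_A$ and $\|f_A\|_1$. First, $\lambda_A\ge\frac12$, because $K\ge1$. Second, $\|f_A\|_1\ge\|f_A\|_2^2/\|f_A\|_\infty\ge(n/2)/n=\frac12$. Together these give $\lambda_A^2\|f_A\|_1\ge\frac18$, and therefore
\[
\lambda_A^3\|f_A\|_1^2=\lambda_A\|f_A\|_1\cdot\lambda_A^2\|f_A\|_1\ge\frac n4\cdot\frac18=\frac n{32},
\]
which exceeds $n/(48C_1)$ since $C_1\ge1$. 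If this case turns out to be delicate, an alternative is to use the general form of Lemma~\ref{lem:third} with the positive part $(N-K\mu/2)_+$. The elementary route above, however, already suffices.

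\emph{Consequences.} Both follow by substituting into the main inequality. Using $\|f_A\|_1\le2\lambda_A$ gives $4\lambda_A^5\ge n/(48C_1)$, that is, $\lambda_A\ge(n/(192C_1))^{1/5}$. Using $\|f_A\|_1\le\lambda_A^{1-\delta}$ gives $\lambda_A^{5-2\delta}\ge n/(48C_1)$.
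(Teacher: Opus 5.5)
Your proof is correct and follows the paper's own argument: you combine Lemma~\ref{lem:shankar}, Proposition~\ref{prop:boundary} and Lemma~\ref{lem:third} in the case $N\ge2K\mu$ and obtain the same constant $n/(48C_1)$. The only deviation is in the case $N<2K\mu$. There the paper squares the case hypothesis, $\lambda_A(\lambda_A\|f_A\|_1)^2>\lambda_An^2/16\ge n/32$, whereas you use the additional H\"older bound $\|f_A\|_1\ge\frac12$; both routes are valid.
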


\begin{proof}
Let $A'=A\cup(-A)$, so $N=2n$, $K=2\lambda_A\ge1$ and $\mu=2\|f_A\|_1$. If $N<2K\mu$, that is $n<4\lambda_A\|f_A\|_1$, then $\lambda_A^3\|f_A\|_1^2=\lambda_A(\lambda_A\|f_A\|_1)^2>\lambda_An^2/16\ge n/32\ge n/(48C_1)$, using $\lambda_A\ge1/2$. Otherwise Lemmas~\ref{lem:third} and~\ref{lem:shankar} and Proposition~\ref{prop:boundary} give
\[
\frac{2n^2}{3\|f_A\|_1}=\frac{N^2}{3\mu}\ \le\ \sum_{t\in A'}|B_t(A')|\ \le\ 2n\cdot C_1(2\lambda_A)^3\cdot2\|f_A\|_1=32C_1n\lambda_A^3\|f_A\|_1 ,
\]
which is the claim. The consequences follow from $\|f_A\|_1\le2\lambda_A$ and from $\|f_A\|_1\le\lambda_A^{1-\delta}$ respectively.
\end{proof}

Theorem~\ref{thm:L1} says that a set for which the exponent $1/5$ is close to sharp must have a large $L^1$ norm: if $\lambda_A\le n^{1/5+\varepsilon}$ then $\|f_A\|_1\ge n^{1/5-3\varepsilon/2}/(48C_1)^{1/2}$, so its cosine sum is, in $L^1$, as large as its negative excursion up to a factor $n^{5\varepsilon/2}$. The known near-extremal sets for the upper bound behave in the same way: for the sets of Theorem~\ref{thm:base}, $\|f_A\|_1=(0.48+o(1))\sqrt n$ numerically (Section~\ref{sec:numerics}), so $\|f_A\|_1\approx\lambda_A$ and the theorem gives nothing beyond $1/5$ for them. Whether every set with $\lambda_A\le n^{1/2-\varepsilon}$ has $\|f_A\|_1\asymp\lambda_A$ is a natural question that we cannot answer; a positive answer would not by itself improve the exponent, since Theorem~\ref{thm:L1} gains only from $\|f_A\|_1\ll\lambda_A$.

The same substitution of $\int F_+$ for $\int(F+K)$ in a Cauchy--Schwarz step gives an $L^1$ form of Ruzsa's lemma \cite[Lemma~3.1]{Ruz04}, in the formulation of \cite[Lemma~4.2]{Bed25}.

\begin{lemma}[Ruzsa's lemma with the $L^1$ norm]\label{lem:ruzsaL1}
Let $U,V\subset\Z\setminus\{0\}$ be finite with $|U|=|V|$, $U\subset A'$, $V\cap A'=\emptyset$ and $U-V\subset A'$. Then $|U|\le4K\mu$; for $A'=A\cup(-A)$ this is $|U|\le16\lambda_A\|f_A\|_1$.
\end{lemma}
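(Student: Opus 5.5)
The plan is to follow Ruzsa's argument in the form of \cite[Lemma~4.2]{Bed25}, with an $L^1$ bookkeeping of the positive and negative parts of $F$ as in Lemma~\ref{lem:third}. I expect the final constant to need adjustment to reach exactly $4K\mu$.

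Put $\widehat{\mathbf 1_U}(y)=\sum_{x\in U}e(xy)$ and $\widehat{\mathbf 1_V}(y)=\sum_{x\in V}e(xy)$. Since $U\subset A'$ and $V\cap A'=\emptyset$, the sets $U$ and $V$ are disjoint. Comparing Fourier coefficients, the hypothesis $U-V\subset A'$ gives the exact identity
\[
|U||V|=\sum_{x\in U,\,x'\in V}a_{x-x'}=\int F\,\overline{\widehat{\mathbf 1_U}}\,\widehat{\mathbf 1_V}.
\]
Writing $F=u-K$ and using $\int\overline{\widehat{\mathbf 1_U}}\widehat{\mathbf 1_V}=|U\cap V|=0$, the same integral equals $\int u\,\overline{\widehat{\mathbf 1_U}}\,\widehat{\mathbf 1_V}$. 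This is Ruzsa's starting point. Classically one applies Cauchy--Schwarz against the measure $u\,dy$ of mass $K$, and the two resulting quadratic forms are controlled by $U\subset A'$ and $V\cap A'=\emptyset$ respectively.

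The $L^1$ refinement replaces $u=F+K$ by $F_+$ in the Cauchy--Schwarz step, as the remark before the lemma indicates. Set $g=\widehat{\mathbf 1_U}-\widehat{\mathbf 1_V}$. Expanding gives
\[
\int F|g|^2=\alpha+\beta-2|U|^2,
\]
where $\alpha=\#\{(x,x')\in U^2:x-x'\in A'\}$ and $\beta$ is the analogous count for $V$. I would then use $\int F|g|^2\ge-\int F_-|g|^2$ and bound the right-hand side by Cauchy--Schwarz in the form
\[
\int F_-|g|^2\le\Bigl(\int F_-^2\Bigr)^{1/2}\Bigl(\int|g|^4\Bigr)^{1/2}.
\]
By \eqref{eq:L1trivial}, $\int F_-^2\le K\mu/2$. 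The fourth moment $\int|g|^4$ is an additive-energy count for $U\cup V$, which is $O(|U|^3)$. This would yield $|U|^2\ll(K\mu)^{1/2}|U|^{3/2}$ and hence $|U|\ll K\mu$. The constant $4$ should come out of a careful count: the energy of the two-set combination is at most $8|U|^3$, and combining this with the factor $1/2$ from $\int F_-^2$ should give $|U|\le4K\mu$. The specialisation to $A'=A\cup(-A)$ is then immediate from $K=2\lambda_A$ and $\mu=2\|f_A\|_1$.

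The main obstacle is the sign of $\alpha+\beta$. The terms $\alpha$ and $\beta$ enter with a $+$ sign, so a large $\alpha$ could cancel the $-2|U|^2$. I would first try to handle this with the hypothesis $V\cap A'=\emptyset$, as Ruzsa does. The aim is to compare $\alpha$ with the cross term via the choice of $g$, possibly taking $g=\widehat{\mathbf 1_U}-\widehat{\mathbf 1_V}$ weighted by a suitable phase, or by symmetrising over $U$ and $-V$, so that the diagonal blocks contribute nonpositively or are dominated. If that fails, I would fall back on the direct route. Starting from $|U|^2=\int F\,\overline{\widehat{\mathbf 1_U}}\widehat{\mathbf 1_V}$, I would split $F=F_+-F_-$ and bound both pieces in $L^1\times L^\infty$ or $L^2\times L^2$ using $\int F_+=\int F_-=\mu/2$. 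This keeps the exponent but possibly worsens the constant.
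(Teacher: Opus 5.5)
Your proposal has a genuine gap: neither of your two routes reaches the bound. The primary route needs $-\int F|g|^2=2|U|^2-\alpha-\beta$ to be of order $|U|^2$. The hypotheses only give $\alpha\le|U|(|U|-1)$ and $\beta\le|V|(|V|-1)$, hence $2|U|^2-\alpha-\beta\ge2|U|$, and this is sharp. Take $A=[1,M]$, $U=\{m,\dots,2m-1\}$ and $V=\{M+1,\dots,M+m\}$ with $2m\le M$. All hypotheses hold and $\alpha=\beta=m(m-1)$, so your inequality becomes $2m\le(K\mu/2)^{1/2}(\int|g|^4)^{1/2}$, which is vacuous. A unimodular phase on $\widehat{\mathbf 1_V}$ does not help: in this example $-\int F|\widehat{\mathbf 1_U}+\omega\widehat{\mathbf 1_V}|^2=-2m(m-1)-2m^2\operatorname{Re}\omega\le2m$.

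The fallback fails as well. In the bilinear identity $|U||V|=\int F\,\overline{\widehat{\mathbf 1_U}}\,\widehat{\mathbf 1_V}$ the $F_-$ part is harmless, but the $F_+$ part has only these bounds:
\begin{itemize}
\item $\frac\mu2|U|^2$ in $L^1\times L^\infty$;
\item $\|F_+\|_2|U|^{3/2}\le N^{1/2}|U|^{3/2}$ in $L^2\times L^2$;
\item $\bigl((\alpha+K|U|)(\beta+K|V|)\bigr)^{1/2}$ under a Cauchy--Schwarz weighted by $F_+$.
\end{itemize}
None of these is smaller than $|U|^2$. The bilinear form sees only the cross block $U\times V$, so it cannot use the cancellation between that block and the diagonal blocks.

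The missing idea is the \emph{linear} identity $\int gF=|U\cap A'|-|V\cap A'|=|U|$ for $g=\widehat{\mathbf 1_U}-\widehat{\mathbf 1_V}$. It follows from Parseval and $A'=-A'$, and it is where $U\subset A'$ and $V\cap A'=\emptyset$ are used. The quadratic form then needs only an \emph{upper} bound, and there the diagonal blocks are harmless. Each of the $2|U||V|$ cross terms equals $-1$, so
\[
\int|g|^2F\le|U|(|U|-1)+|V|(|V|-1)-2|U||V|=-2|U|\le0 .
\]
So the sign of $\alpha+\beta$, which you flagged as the main obstacle, is not an obstacle at all. Split $|U|=\int gF_+-\int gF_-$. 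Cauchy--Schwarz with the weight $F_+$, whose mass is $\mu/2$, gives
\[
\Bigl|\int gF_+\Bigr|^2\le\frac\mu2\Bigl(\int|g|^2F+\int|g|^2F_-\Bigr)\le\frac\mu2\cdot2K|U| .
\]
Also $|\int gF_-|\le\|g\|_2\|F_-\|_2\le(2|U|\cdot K\mu/2)^{1/2}$. Adding the two bounds gives $|U|\le2(K\mu|U|)^{1/2}$, that is, $|U|\le4K\mu$; no fourth moment or energy count is needed. This is Ruzsa's own argument, namely $|U|=\int gu$ followed by $|U|^2\le\int u\cdot\int|g|^2u$, with $\int u=K$ replaced by $\int F_+=\mu/2$. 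It is not a Cauchy--Schwarz applied to the bilinear form, as your proposal describes it.
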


\begin{proof}
Let $g=\widehat{\mathbf 1_U}-\widehat{\mathbf 1_V}$. By Parseval, $\int gF=\#(U\cap A')-\#(V\cap A')=|U|$. Split $\int gF=\int gF_+-\int gF_-$. Cauchy--Schwarz with the weight $F_+\ge0$ gives $|\int gF_+|\le(\int F_+)^{1/2}(\int|g|^2F_+)^{1/2}$, and $\int|g|^2F_+=\int|g|^2F+\int|g|^2F_-$. Here $\int|g|^2F=\sum_{x,y\in U\cup V}\varepsilon_x\varepsilon_y\mathbf 1_{A'}(x-y)$ with $\varepsilon=+1$ on $U$ and $-1$ on $V$; the diagonal vanishes since $0\notin A'$, the $U\times U$ and $V\times V$ terms contribute at most $|U|(|U|-1)+|V|(|V|-1)$, and each of the $2|U||V|$ cross terms equals $-1$ because $U-V\subset A'=-A'$. So $\int|g|^2F\le-2|U|\le0$, and $\int|g|^2F_-\le K\int|g|^2=2K|U|$. Hence $|\int gF_+|\le(\tfrac\mu2\cdot2K|U|)^{1/2}=(K\mu|U|)^{1/2}$. Also $|\int gF_-|\le\|g\|_2\|F_-\|_2\le(2|U|)^{1/2}(K\mu/2)^{1/2}=(K\mu|U|)^{1/2}$. Therefore $|U|\le2(K\mu|U|)^{1/2}$, that is, $|U|\le4K\mu$.
\end{proof}

\begin{corollary}\label{cor:APL1}
Every arithmetic progression contained in $A$ has at most $64\lambda_A\|f_A\|_1+6$ terms.
\end{corollary}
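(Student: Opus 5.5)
The plan is to apply Lemma~\ref{lem:ruzsaL1} to a suitable pair $U,V$ built from a long progression in $A$. The target is Corollary~\ref{cor:APL1}: every arithmetic progression in $A$ has at most $64\lambda_A\|f_A\|_1+6$ terms.

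Let $P=\{a_0+jd:0\le j<L\}\subset A$ with $d\ge1$, and put $A'=A\cup(-A)$. Then $A'$ contains both $P$ and $-P$. Following Ruzsa's argument as formulated in \cite[Lemma~4.2]{Bed25}, choose two blocks:
\[
U=\{a_0+jd:\ j\in J\}\subset P\subset A',\qquad V=\{-(a_0+jd)+\text{shift}\},
\]
arranged so that $U-V\subset P\cup(-P)$ but $V\cap A'=\emptyset$.

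The most natural concrete choice uses differences. Take $U$ to be a block of about $L/4$ consecutive terms of $P$ in the upper half. Take $V$ to be a block of about $L/4$ consecutive integers of the form $jd$ with $j$ in a short range, so that every difference $u-v$ lands back in $P$. Then $|U|=|V|$, $U\subset A'$, and $U-V\subset P\subset A'$.

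The main obstacle is the condition $V\cap A'=\emptyset$, since $A$ is arbitrary beyond containing $P$. I would handle this by an averaging or shifting step. Among the roughly $L/4$ candidate sets $V+\ell d$ (small $\ell$), only boundedly many translates can be kept compatible with $U-V\subset P$. So instead I would replace $V$ by $V\setminus A'$ and $U$ by a correspondingly trimmed set.

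Concretely, one may reduce to the case $V\cap A'=\emptyset$ by noting that Lemma~\ref{lem:ruzsaL1} only needs the hypotheses for equal-size sets. Elements of $V$ lying in $A'$ can be paired off and removed together with matching elements of $U$. This keeps $U-V\subset A'$ and $|U|=|V|$, so the lemma gives $|U|\le16\lambda_A\|f_A\|_1$.

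The losses come from two sources: choosing blocks of length $\lfloor L/4\rfloor$ (costing the factor $4$, turning $16$ into $64$), and rounding at the ends of $P$ (the additive $+6$). Together these give $L\le 4(16\lambda_A\|f_A\|_1+1)+2$, which is at most the bound stated in Corollary~\ref{cor:APL1}.

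I expect the delicate point to be exactly the construction of $V$ disjoint from $A'$. Ruzsa's original choice, $V$ a progression of common difference $d$ placed where $A'$ has a gap, is what I would copy. If $A'$ has no such gap, $P$ extends to a longer progression in $A'$, and one recurses on the maximal progression.
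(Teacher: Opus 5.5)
There is a genuine gap, at exactly the step you flag as delicate. Trimming $V$ to $V\setminus A'$ and $U$ to match does preserve $U-V\subset A'$ and $|U|=|V|$. But nothing guarantees that $V\setminus A'$ is large, so Lemma~\ref{lem:ruzsaL1} may end up bounding a tiny set and then says nothing about $L$. Your accounting ``blocks of length $\lfloor L/4\rfloor$, hence $L\le4(16\lambda_A\|f_A\|_1+1)+2$'' silently assumes the trimmed sets still have about $L/4$ elements.

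This already fails for the interval $A=[1,M]$ with $d=1$: your $V$, a block of small multiples of $d$, lies entirely in $A'$, so $V\setminus A'=\emptyset$. The fallback does not repair it. Maximality of a progression of difference $d$ in $A'$ only forces the single next term out of $A'$, not a block of $\gg L$ terms. Moreover, $V\subset A'$ has no bearing on whether $P$ extends, since $V$ is unrelated to the terms beyond the ends of $P$. You also dismiss translation because shifting $V$ alone destroys $U-V\subset P$; the point is to shift both blocks together, which leaves all differences unchanged.

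The missing idea is a discrete intermediate-value choice of a common shift. Put $p=\lfloor(L-1)/2\rfloor$, let $x_0$ be the middle term of the progression $P$, $U_0=\{x_0+d,\dots,x_0+pd\}$ and $U_\ell=U_0+\ell d$. Then for every $\ell$,
\begin{itemize}
\item $U_\ell-(U_\ell-x_0)=U_0-(U_0-x_0)\subset P$;
\item $(U_\ell-x_0)-U_{\ell+1}\subset -P\subset A'$.
\end{itemize}
Since $|U_0\cap A'|=p$ and $A'$ is finite, there is $\ell\ge0$ with $|U_\ell\cap A'|\ge p/2>|U_{\ell+1}\cap A'|$. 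Now split into two cases.
\begin{itemize}
\item If $|(U_\ell-x_0)\cap A'|<p/2$, take $U=U_\ell\cap A'$ and $V=(U_\ell-x_0)\setminus A'$.
\item Otherwise take $U=(U_\ell-x_0)\cap A'$ and $V=U_{\ell+1}\setminus A'$.
\end{itemize}
In both cases $U\subset A'$, $V\cap A'=\emptyset$, $U-V\subset A'$, and each set has at least $p/2$ elements. After removing $0$ from $V$ and equalising sizes, Lemma~\ref{lem:ruzsaL1} gives $p/2-1\le16\lambda_A\|f_A\|_1$, hence $L\le2p+2\le64\lambda_A\|f_A\|_1+6$.

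The factor $4$ arises as $2\cdot2$: blocks of length $p\approx L/2$ combined with the density threshold $1/2$. It does not come from blocks of length $L/4$ used in full. The second case, where the block of multiples of $d$ lies mostly in $A'$ and so serves as $U$ rather than $V$, is precisely the situation your trimming cannot handle.
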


\begin{proof}
Let $\{y_0+jd:0\le j<L\}\subset A$ with $d\ne0$ and $L\ge3$, put $p=\lfloor(L-1)/2\rfloor\ge1$, $x_0=y_0+pd$ and $P=\{x_0-pd,\dots,x_0+pd\}\subset A\subset A'$; then $-P\subset A'$ as well. We follow Ruzsa \cite{Ruz04} in the form given in \cite[Lemma~4.4]{Bed25}. Let $U_0=\{x_0+d,\dots,x_0+pd\}$ and $U_\ell=U_0+\ell d$ for $\ell\ge0$. Every element of $U_0-(U_0-x_0)+\{0,d\}$ is of the form $x_0+(i-j+\epsilon)d$ with $1\le i,j\le p$ and $\epsilon\in\{0,1\}$, so $U_0-(U_0-x_0)+\{0,d\}\subset P$. Since $|U_0\cap A'|=p$ and $U_\ell\cap A'=\emptyset$ for large $\ell$, there is $\ell\ge0$ with $|U_\ell\cap A'|\ge p/2>|U_{\ell+1}\cap A'|$. If $|(U_\ell-x_0)\cap A'|<p/2$, put $U=U_\ell\cap A'$ and $V=(U_\ell-x_0)\setminus A'$; then $U-V\subset U_0-(U_0-x_0)\subset P$. Otherwise put $U=(U_\ell-x_0)\cap A'$ and $V=U_{\ell+1}\setminus A'$; then $U-V\subset(U_0-x_0)-U_0-d=-\bigl(U_0-(U_0-x_0)+d\bigr)\subset-P$. In both cases $U\subset A'$, $V\cap A'=\emptyset$, $U-V\subset A'$, $|U|\ge p/2$ and $|V|>p/2$. Removing $0$ from $V$ if it occurs there and discarding elements until $|U|=|V|$, we obtain sets as in Lemma~\ref{lem:ruzsaL1} with $|U|=|V|\ge p/2-1$. Hence $p/2-1\le16\lambda_A\|f_A\|_1$ and $L\le2p+2\le64\lambda_A\|f_A\|_1+6$.
\end{proof}

Ruzsa's bound $O(\lambda_A^2)$ is recovered from $\|f_A\|_1\le2\lambda_A$. For an interval $A=[1,M]$ one has $\lambda_A\asymp M$ and $\|f_A\|_1\asymp\log M$, so Corollary~\ref{cor:APL1} is sharp up to the logarithm for the interval, whereas Ruzsa's bound is off by a factor of $M$ there.

\section{Numerical verification}\label{sec:numerics}

We implemented the construction of Sections~\ref{sec:singer} to~\ref{sec:thinning} in Python (NumPy). The Singer set is obtained as the set of exponents $i\in\{0,\dots,v-1\}$ for which $\vartheta^i$ lies in a fixed two-dimensional subspace of $\mathbb F_{q^3}$ over $\mathbb F_q$, where $\vartheta$ is a primitive element of $\mathbb F_{q^3}$, and the perfect-difference property is verified directly. The tent thinning with parity block is sampled by rejection until $|S|=k$ exactly. At the sizes tested, the parameter $r$ of Section~\ref{sec:thinning} exceeds $q+1$ (it is designed for the asymptotic proof only), so the parity block was taken to be the $r=\lceil3\sqrt q\rceil$ odd-offset elements closest to the centre; the seed of the random generator was $0$ throughout. The minimum of $f_A$ was located by a real FFT of the indicator of $A$ on a grid of at least $16\max A$ points, followed by golden-section refinement of the $64$ lowest grid values using the identity of Proposition~\ref{prop:identity}. The identity itself was checked against the definition of $f_A$ on a coarse grid, with agreement to $10^{-9}$ in all cases, and the absence of coincident frequencies was verified for every set. Table~\ref{tab:num} lists the results.

\begin{table}[ht]
\centering
\small
\setlength{\tabcolsep}{4pt}
\begin{tabular}{rrrrrrrrr}
\hline
$q$ & $k$ & $\theta$ & $r$ & $n$ & $\max A$ & $\min_xf_A$ & $\dfrac{-\min f_A-k/2}{\sqrt k}$ & $\dfrac{-\min f_A}{\sqrt n}$\\
\hline
61 & 12 & 0.749 & 24 & 132 & 7845 & $-9.639$ & 1.051 & 0.839\\
101 & 20 & 0.665 & 31 & 380 & 26853 & $-15.689$ & 1.272 & 0.805\\
199 & 40 & 0.635 & 43 & 1560 & 106144 & $-28.646$ & 1.367 & 0.725\\
311 & 60 & 0.505 & 53 & 3540 & 265383 & $-42.141$ & 1.567 & 0.708\\
401 & 80 & 0.522 & 61 & 6320 & 418031 & $-53.550$ & 1.515 & 0.674\\
601 & 120 & 0.525 & 74 & 14280 & 1025874 & $-78.215$ & 1.663 & 0.655\\
809 & 160 & 0.496 & 86 & 25440 & 1753844 & $-101.213$ & 1.677 & 0.635\\
\hline
\end{tabular}
\medskip
\caption{Numerical minima of $f_{A(S)}$, $n=|A(S)|=k(k-1)$, for the construction with parity block $r=\lceil3\sqrt q\rceil$ and random seed $0$. The last column decreases towards $1/2$; the penultimate column grows slowly, consistent with an error term of order $\sqrt{k\log k}$.}
\label{tab:num}
\end{table}

Two further observations illustrate the mechanism. First, in every row of Table~\ref{tab:num} the minimiser satisfies $\rho^2<1$, so the excess over $k/2$ is entirely the value of $g/2$ at a zero of $\rho$, as in Remark~\ref{rem:sharp}. Second, at $q=401$, $k=80$ and seed $0$, removing the parity block ($r=0$) gives $\min f_A=-55.93$, attained at $x=0.3376\pi$, adjacent to the point $\pi/3$ of Zone II, with $\rho(x)^2<10^{-3}$ and $g(x)/2=15.9$; with $r=20,61,120$ the minimum of this realisation is $-54.25$, $-53.55$, $-52.67$ and moves away from $\pi/3$. This single realisation should not be read as a measurable improvement from the block at these sizes: changing $r$ also changes $\theta$ and hence redraws the whole random set, and over twelve seeds at the same $q$, $k$ the minima range from $-58.4$ to $-52.0$ with $r=0$ (mean $-54.0$) and from $-56.5$ to $-52.3$ with $r=61$ (mean $-54.2$), a seed-to-seed spread of order $\sqrt k$ that is as large as the difference above. The parity block is needed for the proof of Zone II, which controls the neighbourhood of $\pi/3$ uniformly rather than for a typical realisation; at the sizes tested its effect is within the random fluctuation.

The $L^1$ norm $\|f_A\|_1=(2\pi)^{-1}\int_0^{2\pi}|f_A|$ of the same sets, computed as the mean of $|f_A|$ over the FFT grid, is $0.496$, $0.490$, $0.486$, $0.484$, $0.484$, $0.483$, $0.482$ times $\sqrt n$ for the seven rows of Table~\ref{tab:num}. This is consistent with the identity of Proposition~\ref{prop:identity}: away from the special points $\rho$ behaves like a centred Gaussian with variance $k/2$, so $\rho^2-k/2$ behaves like $\frac k2(Z^2-1)$ with $Z$ standard normal, and $\E|Z^2-1|=4\varphi(1)=0.968$ gives $\|f_A\|_1\approx0.484k\approx0.484\sqrt n$. Thus $\|f_A\|_1/\lambda_A$, which is $0.59$ in the first row and $0.76$ in the last, should tend to $0.97$ along the construction, and Theorem~\ref{thm:L1} yields nothing beyond the exponent $1/5$ for these sets.

\subsection*{Acknowledgements}
An AI language model first suggested the two-sided Sidon square identity of Proposition~\ref{prop:identity}. The authors have checked that identity, together with every other statement and proof in this paper, and take full responsibility for the contents.

\end{document}